\documentclass[11pt]{article}
\usepackage[margin=1in]{geometry}

\usepackage[utf8]{inputenc} 
\usepackage[T1]{fontenc}
\usepackage{url}
\usepackage{ifthen}
\usepackage{appendix}
\usepackage[backend=biber,style=alphabetic]{biblatex}
\usepackage{comment}
\addbibresource{main.bib}

\usepackage{algorithmic}
\usepackage{amsmath}
\usepackage{amsthm}
\usepackage{amssymb}
\usepackage{color}
\usepackage{mathtools}
\usepackage{makecell}
\usepackage{bm}
\usepackage{todonotes}
\usepackage{diagbox}
\usepackage{tablefootnote}

\DeclareMathOperator{\Var}{Var}

\DeclareMathOperator{\Ber}{Ber}
\DeclareMathOperator{\Rad}{Rad}

 

 
\let\baraccent=\= 
\renewcommand{\=}[1]{\stackrel{#1}{=}} 

\providecommand{\RR}{\mathbb{R}}

\providecommand{\ZZ}{\mathbb{Z}}

\providecommand{\cG}{\mathcal{G}}
\providecommand{\cL}{\mathcal{L}}

\providecommand{\cP}{\mathcal{P}}

\providecommand{\PP}{\mathbb{P}}
\providecommand{\EE}{\mathbb{E}}

\providecommand{\e}{\epsilon}
\providecommand{\eps}{\epsilon}

\mathchardef\mhyphen="2D 

\providecommand{\sm}{\setminus}

\newcommand{\interior}[1]{%
  {\kern0pt#1}^{\mathrm{o}}%
}
\newcommand\independent{\protect\mathpalette{\protect\independenT}{\perp}}
\def\independenT#1#2{\mathrel{\rlap{$#1#2$}\mkern2mu{#1#2}}}
\providecommand{\indy}{\independent}


\newtheorem{theorem}{Theorem}[section]
\newtheorem{lemma}[theorem]{Lemma}
\newtheorem{definition}[theorem]{Definition}

\newtheorem{conjecture}[theorem]{Conjecture}
\newtheorem{claim}[theorem]{Claim}
\newtheorem{proposition}[theorem]{Proposition}
\newtheorem{corollary}[theorem]{Corollary}

\usepackage{thmtools}
\usepackage{thm-restate}

\begin{document}
\title{Subadditivity Beyond Trees and the Chi-Squared \\ Mutual Information} 


\author{Emmanuel Abbe \\ EPFL
  \and
  Enric Boix Adser\`a \\ MIT
}


\maketitle

\begin{abstract}
In 2000, Evans et al.\ \cite{evans} proved the subadditivity of the mutual information in the broadcasting on tree model with binary vertex labels and symmetric channels. They raised the question of whether such subadditivity extends to loopy graphs in some appropriate way. We recently proposed such an extension that applies to general graphs and binary vertex labels \cite{abbe2018information}, using synchronization models and relying on percolation bounds. This extension requires however the edge channels to be symmetric on the product of the adjacent spins. A more general version of such a percolation bound that applies to asymmetric channels is also obtained in \cite{yury_percolation}, relying on the SDPI, but the subadditivity property does not follow with such generalizations.

In this note, we provide a new result showing that the subadditivity property still holds for arbitrary (asymmetric) channels acting on the product of  spins, when the graphs are restricted to  be series-parallel. The proof relies on the use of the Chi-squared mutual information rather than the classical mutual information, and various properties of the former are discussed. 

We also present a generalization of the broadcasting on tree model (the synchronization on tree) where the bound from \cite{yury_percolation} relying on the SPDI can be significantly looser than the bound resulting from the Chi-squared subadditivity property presented here.
\end{abstract}

\tableofcontents


\section{Introduction}


\subsection{Spin synchronization model}
We consider the problem of reconstructing $n$ independent, uniform $(\pm 1)$-valued spins $X_1,\dots,X_n \stackrel{i.i.d.}{\sim} \Rad(1/2)$ living on the vertices of an $n$-vertex graph $G$, by observing their interactions $Y_{e}$ on the edges of the graph. For each $e = (i,j) \in E(G)$, the interaction $Y_e = Y_{ij}$ depends only on $X_i \cdot X_j$. Formally, we can factor the joint distribution as $$P_{X,Y}(x,y) = \left(\prod_{(i,j) \in E(G)} P_{Y_{ij}|X_i \cdot X_j}(y_{ij}|x_i \cdot x_j)\right) \cdot P_{X}(x).$$ In other words, each $Y_{ij}$ is the output of $X_i \cdot X_j$ through a channel $Q_{ij} := P_{Y_{ij} | X_i \cdot X_j}$.

\begin{definition}
We refer to $(X,Y)$ as a spin synchronization model on a graph $G$ with edge channels $Q$.
\end{definition}

This spin synchronization model has previously appeared in \cite{our_grid}, for example. Note that in \cite{our_grid} the alphabets were not restricted to be binary, but were arbitrary groups instead.

Given a spin synchronization problem $(X,Y)$, the goal is to reconstruct the spin $X_u$ for some $u \in V(G)$, given $Y$ and $X_W$ where $W \subset V(G)$ (due to symmetry, we may as well freeze at least one reference vertex). It is thus natural to look at some measure of dependency between to generic vertices in the graph, and a natural quantity of interest is the mutual information $I(X_u; X_W, Y)$.

\paragraph*{Connection to other reconstruction models} 
Depending on the choices of the graph and the edge interaction channels, the spin synchronization model captures models such as (a) broadcasting on trees \cite{evans}, (b) censored block models \cite{heimlicherlelargemassoulie,abbs}, (c) synchronization on grids \cite{our_grid}, and (d) spiked Wigner models \cite{yash_sbm}.
Thus, several information-theoretic reconstruction thresholds for these models can be obtained by studying the information-theoretic thresholds for the spin synchronization model. We refer the reader to \cite{abbe2018information,yury_percolation} for details on these connections.

\paragraph*{Connection to Ising Model} The spin synchronization model is also related to the Ising model in statistical physics; conditioned on the edge observations $Y$, the posterior distribution of the vertex spins $X$ is given by an Ising model. However, we will interested here in the average-case behavior over the edge observations in the model, so naively applying correlation decay results on Ising models (e.g., Dobrushin conditions \cite{dobrushin1968problem}) would yield weaker bounds than those we will obtain.

\subsection{Main result}

The main contribution of this paper is Theorem \ref{thm:mainseriesparallelbound}, a new impossibility result for reconstruction in the spin synchronization model.\footnote{Part of the results in this note appeared in Enric Boix's undergraduate senior thesis \cite{enric2018thesis}.} This result is incomparable to the previously-known bounds derived in \cite{abbe2018information} and \cite{yury_percolation}. We discuss in detail the relationship between the new bound and the previous bounds, and in Section \ref{sec:generalizedbot} we give examples of spin synchronization problems for which the new bound is tighter. In Section \ref{sec:botapplication}, we also use Theorem \ref{thm:mainseriesparallelbound} to rederive the impossibility result of \cite{evans} for broadcasting on trees.

For our main result, we work with the Chi-squared mutual information, $I_2$, instead of the classical KL mutual information, $I_{KL}$. See Appendix~\ref{app:chi2mutualinformation} for definitions. As we will see, the Chi-squared mutual information turns out to be a more natural choice for the proof of the theorem. In any case, for our purposes the KL and Chi-squared mutual informations are equal up to a factor of 2, because of the following well-known inequalities $\frac{1}{2}I_2(A; U) \leq I_{\mathrm{KL}}(U; A) \leq I_2(U; A)$ when $U \sim \Rad(1/2)$, reproved in Appendix \ref{app:chi2mutualinformation}.

We are ready to state our main result.

\begin{theorem}\label{thm:mainseriesparallelbound} Let $(X,Y)$ be a spin synchronization model on a series-parallel graph $G$ with terminals $u$ and $v$, and arbitrary edge channels $Q$. Then $$I_2(X_u; X_v \mid Y_{E(G)}) \leq \sum_{P \in \cP_G(u,v)} I_2(X_u; X_v \mid Y_{E(P)}).$$ Here $\cP_G(u,v)$ is the set of paths (self-avoiding walks) from $u$ to $v$ in $G$.
\end{theorem}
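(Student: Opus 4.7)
The strategy is to induct on the series-parallel decomposition of $G$, after recasting the target quantity in a convenient form. Because every edge channel $Q_{ij}$ depends on the spins only through the product $X_i X_j$, the global flip $X \mapsto -X$ preserves the joint law, and hence both marginals $P(X_u = \cdot \mid Y)$ and $P(X_v = \cdot \mid Y)$ are uniform on $\{\pm 1\}$. Consequently
$$I_2(X_u; X_v \mid Y) = \EE\left[\sigma(Y)^2\right], \qquad \sigma(Y) := \EE[X_u X_v \mid Y] \in [-1, 1].$$
The base case of a single edge is immediate.

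For a series composition $G = G_1 \cdot G_2$ meeting at a cut vertex $w$, the key is the multiplicative identity $\sigma_G(Y_{E(G)}) = \sigma_{G_1}(Y_{E(G_1)}) \cdot \sigma_{G_2}(Y_{E(G_2)})$, where $\sigma_{G_1} := \EE[X_u X_w \mid Y_{E(G_1)}]$ and $\sigma_{G_2} := \EE[X_w X_v \mid Y_{E(G_2)}]$. This follows by conditioning on $X_w$: flip symmetry of $G_1$ alone gives $\EE[X_u \mid Y, X_w] = X_w\, \alpha(Y_{E(G_1)})$ for some $\alpha$ (and analogously for $X_v$), and conditional independence of the $G_1$- and $G_2$-internal spins given $X_w$ yields $\sigma_G = X_w^2 \alpha \beta = \alpha\beta$. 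Flip symmetry applied through $X_w$ further shows that $Y_{E(G_1)}$ and $Y_{E(G_2)}$ are marginally independent, so
$$I_2(X_u; X_v \mid Y_{E(G)}) = I_2(X_u; X_w \mid Y_{E(G_1)}) \cdot I_2(X_w; X_v \mid Y_{E(G_2)}).$$
Applying the inductive hypothesis to $G_1$ and $G_2$, expanding the product of sums, and re-invoking the same identity on each concatenated path (via the natural bijection $\cP_G(u,v) \leftrightarrow \cP_{G_1}(u,w) \times \cP_{G_2}(w,v)$) completes this case.

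For a parallel composition $G = G_1 \parallel G_2$ the paths decompose as $\cP_G(u,v) = \cP_{G_1}(u,v) \sqcup \cP_{G_2}(u,v)$, so it suffices to show the subadditivity
$$I_2(X_u; X_v \mid Y_{E(G_1)}, Y_{E(G_2)}) \leq I_2(X_u; X_v \mid Y_{E(G_1)}) + I_2(X_u; X_v \mid Y_{E(G_2)})$$
when $Y_{E(G_1)} \perp Y_{E(G_2)} \mid X_u X_v$. I expect this to be the main obstacle, because the analogous pointwise inequality is false; the proof must exploit both the zero-mean property of the posterior means $\phi_i := \EE[X_u X_v \mid Y_{E(G_i)}]$ and their marginal independence. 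Bayes gives the combination $\sigma = (\phi_1+\phi_2)/(1+\phi_1\phi_2)$, and changing measure from $P(y_1,y_2) = P(y_1)P(y_2)(1+\phi_1\phi_2)$ to the product $P(y_1)P(y_2)$ reduces the claim to
$$\EE\left[\frac{(\phi_1+\phi_2)^2}{1+\phi_1\phi_2}\right] \leq \EE[\phi_1^2] + \EE[\phi_2^2]$$
with independent zero-mean $\phi_1, \phi_2 \in [-1,1]$. This follows from the algebraic identity $(a+b)^2 - (a^2+b^2)(1+ab) = ab(2-a^2-b^2)$: dividing by $(1+ab)$ and then splitting $1/(1+ab) = 1 - ab/(1+ab)$, the leading contribution $\EE[ab(2-a^2-b^2)] = 2\EE[ab] - \EE[a^3]\EE[b] - \EE[a]\EE[b^3]$ vanishes by independence and zero-mean, while the remaining $-\EE[(ab)^2(2-a^2-b^2)/(1+ab)]$ is non-positive since each factor in the integrand is $\geq 0$ on $[-1,1]^2$.
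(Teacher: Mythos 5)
Your proof is correct, and the high-level skeleton (induct on the series-parallel decomposition; treat series composition via multiplicativity of $\EE[X_uX_v\mid Y]$ and parallel composition via a two-channel subadditivity lemma) is the same as the paper's. The series case also matches the paper's treatment essentially line by line (Proposition~\ref{prop:chi2seriesmultiplicativity}).

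Where you genuinely diverge is in the proof of the parallel subadditivity lemma, which is the real content (Lemma~\ref{lem:chi2parallelsubadditivity} in the paper). The paper expands $I_2(U;A,B)-I_2(U;A)-I_2(U;B)$ into two ``terms,'' introduces the auxiliary quantity $t_a = \sum_b \nu_{B,U|A}(b,1|a)\nu_{B,U|A}(b,-1|a)/(\nu_{B,U|A}(b,1|a)+\nu_{B,U|A}(b,-1|a))$, and bounds each term separately using the subadditivity of $f(x,y)=xy/(x+y)$ together with an explicit evaluation in coordinates $\alpha_a,\beta_a,\gamma_b,\delta_b$. Your argument instead changes measure from $P(y_1,y_2)=P(y_1)P(y_2)(1+\phi_1\phi_2)$ to the product $P(y_1)P(y_2)$, under which $\phi_1:=\EE[U\mid A]$ and $\phi_2:=\EE[U\mid B]$ become genuinely independent and still zero-mean, and then applies the exact polynomial identity
\[
(a+b)^2-(a^2+b^2)(1+ab)=ab\,(2-a^2-b^2),
\]
followed by the split $1/(1+ab)=1-ab/(1+ab)$. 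The contribution $\EE[ab(2-a^2-b^2)]$ vanishes by independence and $\EE[\phi_i]=0$, and what remains is $-\EE\!\left[(ab)^2(2-a^2-b^2)/(1+ab)\right]\le 0$ because each factor is nonnegative on $[-1,1]^2$ (and the degenerate locus $ab=-1$ has $P(A,B)$-measure zero, so the $0/0$ there is immaterial). This is shorter and more transparent than the paper's proof: it isolates, as a single nonnegative quantity, exactly what makes the inequality strict, and it makes explicit that the only structural inputs are independence of the two channels given $U$ and the zero-mean property of the posteriors. Both proofs use the same two facts at bottom; yours packages them more efficiently.

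One small point worth making explicit in your writeup: the reduction to ``$A,B$ are outputs of two independent channels on $U=X_uX_v$'' requires checking that the law of $Y_{E(G_i)}$ given $(X_u,X_v)$ depends only on the product $X_uX_v$ (by the global spin-flip invariance of each sub-model) and that $Y_{E(G_1)}\perp Y_{E(G_2)}\mid U$ (which follows since they are conditionally independent given $(X_u,X_v)$ and each depends only on $U$). You state this implicitly, but it is worth a sentence.
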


\subsection{Comparison to previous results}
\paragraph{Comparison to \cite{abbe2018information}} The recent work \cite{abbe2018information} derives a subadditivity bound similar to Theorem \ref{thm:mainseriesparallelbound}. While the bound of \cite{abbe2018information} applies to spin synchronization models on general graphs, it requires the edge channels of the models to have certain symmetry. In contrast, in Theorem \ref{thm:mainseriesparallelbound}, we handle general channels, while giving up generality on the base graph. For ease of comparison, we state the result of \cite{abbe2018information} below.


\begin{theorem}[ \cite{abbe2018information}]\label{thm:mainsymmetricbound}
Let $(X,Y)$ be a spin synchronization model on a graph $G$, such that the edge channels $Q$ are symmetric: for each $e \in E(G)$, there is a measurable transformation $T_e = T_e^{-1}$ such that $Q_e(\cdot | +1) = T_e \sharp Q_e(\cdot | -1)$. ($T_e\sharp$ denotes the push-forward operation.)

Then for all $u \in V(G), W \subset V(G)$, $$I_2(X_u; X_W\mid Y) \leq \mathrm{conn}_{G,\gamma}(u, W),$$ where $$\gamma((i,j)) = I_2(X_i; X_j\mid Y_{ij})$$ for all $(i,j) \in E(G)$. Here, $\mathrm{conn}_{G,\gamma}(u, W)$ denotes the probability that $u$ is in the same open component as some $w \in W$ in a bond percolation on $G$ where each edge $e$ is independently open with probability $\gamma(e)$.
\end{theorem}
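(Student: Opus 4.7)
The plan is to prove the theorem by induction on the series-parallel decomposition of $G$, using one identity for the series step and one subadditivity lemma for the parallel step. The base case of a single edge is immediate since $\cP_G(u,v)=\{G\}$.

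For the \emph{series step}, write $G$ as the series composition of $G_1$ (with terminals $u,w$) and $G_2$ (with terminals $w,v$) at the cut vertex $w$. I would first establish the exact multiplicativity
\[
I_2(X_u; X_v \mid Y_{E(G)}) \;=\; I_2(X_u; X_w \mid Y_{E(G_1)}) \cdot I_2(X_w; X_v \mid Y_{E(G_2)}).
\]
The ingredients are: (i) the identity $I_2(A;B \mid Y) = \EE_Y[(\EE[AB \mid Y])^2]$ for binary uniform $A,B$, which holds because the $X \mapsto -X$ symmetry of the model makes each spin marginally uniform given any subset of observations; (ii) the same symmetry applied separately inside $V(G_1)$ and $V(G_2)$, which makes $Y_{E(G_1)}$ and $Y_{E(G_2)}$ unconditionally independent; and (iii) the cut-vertex Markov property $X_u \indy X_v \mid X_w, Y_{E(G)}$. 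Together with $X_w^2 = 1$, these give $\EE[X_u X_v \mid Y_{E(G)}] = \EE[X_u X_w \mid Y_{E(G_1)}] \cdot \EE[X_w X_v \mid Y_{E(G_2)}]$, and squaring and using independence yields the displayed identity. Applying the induction hypothesis to $G_1$ and $G_2$, distributing the product over the two sums, and noting that every pair $(P_1,P_2) \in \cP_{G_1}(u,w) \times \cP_{G_2}(w,v)$ concatenates to a unique path $P \in \cP_G(u,v)$ on which the same multiplicativity gives the per-path factor, closes the series step.

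For the \emph{parallel step}, write $G = G_1 \parallel G_2$ with shared terminals $u,v$. Since $\cP_G(u,v) = \cP_{G_1}(u,v) \sqcup \cP_{G_2}(u,v)$, the induction hypothesis reduces the theorem to the parallel subadditivity
\[
I_2(X_u; X_v \mid Y_{E(G_1)}, Y_{E(G_2)}) \;\leq\; I_2(X_u; X_v \mid Y_{E(G_1)}) + I_2(X_u; X_v \mid Y_{E(G_2)}).
\]
Let $S := X_u X_v \sim \Rad(1/2)$; then $Y_{E(G_1)}$ and $Y_{E(G_2)}$ are conditionally independent given $S$. Writing $T_i := \EE[S \mid Y_{E(G_i)}]$, the Bayes formula for conditionally independent binary observations gives $T_{12} := \EE[S \mid Y_{E(G_1)}, Y_{E(G_2)}] = (T_1 + T_2)/(1 + T_1 T_2)$, yielding the algebraic identity $1 - T_{12}^2 = (1-T_1^2)(1-T_2^2)/(1+T_1 T_2)^2$. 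The joint distribution factorizes as $dP_{Y_{E(G_1)}, Y_{E(G_2)}} = (1 + T_1 T_2)\, dP_{Y_{E(G_1)}} \otimes dP_{Y_{E(G_2)}}$, which recasts the subadditivity as
\[
\EE_{P_{Y_{E(G_1)}} \otimes P_{Y_{E(G_2)}}}\!\!\left[\frac{(1 - T_1^2)(1 - T_2^2)}{1 + T_1 T_2}\right] \;\geq\; 1 - \EE[T_1^2] - \EE[T_2^2].
\]
Applying $1/(1+x) \geq 1 - x$ for $x > -1$, expanding under independence, using $\EE[T_i]=0$, and invoking $|\EE[T_i^3]| \leq \EE[T_i^2]$ (which follows from $|T_i|\leq 1$) completes the proof.

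The main obstacle is the parallel subadditivity lemma. The series case collapses to a clean pointwise identity thanks to the Markov structure at the cut vertex, but the parallel inequality \emph{fails pointwise} in $(T_1,T_2)$ and only holds in expectation; the boundedness $|T_i|\leq 1$ (reflecting that $T_i$ is a posterior mean of a $\pm 1$-valued variable) is the crucial extra ingredient that beats the negative pointwise contributions. This is also where the chi-squared mutual information is much more tractable than the KL version: the quadratic identity $I_2(A; B \mid Y) = \EE_Y[(\EE[AB \mid Y])^2]$ and the rational identity for $1 - T_{12}^2$ give the explicit expressions that make both composition arguments elementary.
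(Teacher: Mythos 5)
There is a genuine mismatch here: what you have written is a proof (essentially the paper's own proof) of Theorem~\ref{thm:mainseriesparallelbound}, not of Theorem~\ref{thm:mainsymmetricbound}. Two concrete problems follow. First, Theorem~\ref{thm:mainsymmetricbound} is stated for an \emph{arbitrary} graph $G$, and arbitrary graphs (e.g.\ $K_4$, or the grid) admit no series-parallel decomposition, so your induction never gets off the ground outside the series-parallel class. Second, even where your argument applies, it yields the upper bound $\sum_{P \in \cP_G(u,v)} \prod_{e \in E(P)} \gamma(e)$, which is an \emph{over}-estimate of the percolation quantity $\mathrm{conn}_{G,\gamma}(u,v)$ (union bound over paths); for two parallel edges with $\gamma = 1$ the path sum is $2$ while the connection probability is $1$. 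So deducing $I_2 \leq \mathrm{conn}_{G,\gamma}(u,v)$ from $I_2 \leq \sum_P(\cdots)$ goes in the wrong direction. A related warning sign is that your argument never invokes the symmetry hypothesis $Q_e(\cdot \mid +1) = T_e \sharp Q_e(\cdot \mid -1)$, whereas the percolation bound with $\gamma(e) = I_2(X_i; X_j \mid Y_{ij})$ genuinely needs it (for asymmetric channels one must inflate $\gamma$ to the SDPI constant, as in Theorem~\ref{thm:yurytheorem}). You also do not address the reduction from general $W$ to $|W| = 1$.

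The paper's proof is structurally different: it first reduces to $|W|=1$ by tying $W$ to a new vertex with noiseless edges; then it uses the symmetry to condition on the orbits $Z_e = \{Y_e, T_e(Y_e)\}$, which reduces every edge channel to a binary symmetric channel; and finally it proves the BSC case by induction on the number of edges with nontrivial noise, interpolating the noise parameter of one edge and showing via an explicit monotonicity claim that $I(t) \leq I(0)(1-t^2) + I(1)t^2$, which matches the linearity of the connection probability in $\gamma_t(f) = t^2$. That interpolation is the ingredient that produces the genuine percolation bound rather than the looser path sum. On the positive side, your derivation of the parallel-step inequality via $T_{12} = (T_1+T_2)/(1+T_1T_2)$, the density ratio $1+T_1T_2$, and the bound $|\EE[T_i^3]| \leq \EE[T_i^2]$ is correct and is a clean alternative to the paper's proof of Lemma~\ref{lem:chi2parallelsubadditivity}; it would serve well in a proof of Theorem~\ref{thm:mainseriesparallelbound}, just not of the statement at hand.
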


This gives in turn a subadditivity property for the Chi-squared mutual information analogous to Theorem \ref{thm:mainseriesparallelbound}:

\begin{corollary}[from \cite{abbe2018information}]\label{cor:subadditivitybound}
Let $(X,Y)$ be as in Theorem \ref{thm:mainsymmetricbound}. Then for all $u \in V(G), W \subset V(G)$, $I_2$ is subadditive over paths:
\begin{align*}I_2(X_u; X_W\mid Y_{E(G)}) &\leq \sum_{P \in \cP_G(u,v)} I_2(X_u; X_v\mid Y_{E(P)}),\end{align*}
where $\cP_G(u,W)$ is the set of paths (i.e., self-avoiding walks) from $u$ to $W$ in $G$.
\end{corollary}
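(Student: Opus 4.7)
The plan is to derive Corollary~\ref{cor:subadditivitybound} directly from Theorem~\ref{thm:mainsymmetricbound}: that theorem already bounds $I_2(X_u;X_W\mid Y_{E(G)})$ by the connectivity probability $\mathrm{conn}_{G,\gamma}(u,W)$ in a bond percolation where each edge $e=(i,j)$ is independently open with probability $\gamma(e)=I_2(X_i;X_j\mid Y_{ij})$. So only two ingredients remain: a union bound on the percolation connectivity, and an identification of the per-path edge product with the chi-squared information of the two endpoints of that path.

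First I would bound the connectivity probability by a union bound over self-avoiding walks. If $u$ lies in the same open cluster as some $w\in W$, then there is at least one self-avoiding walk $P$ from $u$ to some such $w$ all of whose edges are open; since edges are open independently,
\begin{equation*}
\mathrm{conn}_{G,\gamma}(u,W)\;\leq\;\sum_{w\in W}\sum_{P\in\cP_G(u,w)}\prod_{e\in P}\gamma(e)\;=\;\sum_{P\in\cP_G(u,W)}\prod_{e=(i,j)\in P} I_2(X_i;X_j\mid Y_{e}).
\end{equation*}

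Second I would show the path tensorization identity: for any self-avoiding walk $P$ from $u$ to $w$ with vertices $u=v_0,v_1,\dots,v_k=w$, one has $\prod_{e\in P}I_2(X_i;X_j\mid Y_e)=I_2(X_u;X_w\mid Y_{E(P)})$. The key observation is that the bond variables $Z_i:=X_{v_{i-1}}X_{v_i}$ are mutually independent $\Rad(1/2)$, each $Y_{v_{i-1}v_i}$ depends only on $Z_i$, and telescoping gives $X_uX_w=\prod_i Z_i$. Because the joint model is invariant under a global spin flip, $X_u$ and $X_w$ remain $\Rad(1/2)$ marginally after conditioning on $Y_{E(P)}$, so the Rademacher identity $I_2(U;A)=\EE[\EE[U\mid A]^2]$ from Appendix~\ref{app:chi2mutualinformation} applies and gives $I_2(X_u;X_w\mid Y_{E(P)})=\EE_Y[\EE[X_uX_w\mid Y_{E(P)}]^2]$. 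Conditional independence of the blocks $(Z_i,Y_{v_{i-1}v_i})$ then factorizes this into $\prod_i\EE[\EE[Z_i\mid Y_{v_{i-1}v_i}]^2]=\prod_{e\in P}I_2(X_i;X_j\mid Y_e)$. Substituting this identity into the previous display yields the corollary.

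The only delicate point is the path tensorization: it requires both the Rademacher characterization of $I_2$ and the global spin-flip symmetry that preserves marginal uniformity of $X_u,X_w$ after conditioning. Everything else is an immediate union bound plus a direct invocation of Theorem~\ref{thm:mainsymmetricbound}.
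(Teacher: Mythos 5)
Your proposal is correct and follows essentially the same route as the paper: invoke Theorem~\ref{thm:mainsymmetricbound}, apply the union bound over self-avoiding walks to the connection probability, and identify the per-path edge product with $I_2(X_u;X_w\mid Y_{E(P)})$ (the paper's Lemma~\ref{lem:i2onpath}, proved there via Propositions~\ref{prop:i2equalsprod} and~\ref{prop:chi2seriesmultiplicativity}). Your factorization of $\EE[\EE[X_uX_w\mid Y_{E(P)}]^2]$ via the independent bond variables $Z_i$ is exactly the content of Proposition~\ref{prop:chi2seriesmultiplicativity} applied along the path.
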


Notice that when the edge channels are symmetric, the subadditivity bound of Theorem \ref{thm:mainseriesparallelbound} may be weaker than the percolation bound of Theorem \ref{thm:mainsymmetricbound}. However, Theorem \ref{thm:mainseriesparallelbound} also applies to spin synchronization instances with asymmetric channels, while Theorem \ref{thm:mainseriesparallelbound} does not.

For completeness, we include a shortened proof of Theorem \ref{thm:mainsymmetricbound} and Corollary \ref{cor:subadditivitybound} in Appendix~\ref{sec:thmmainsymmetricboundproof}.

\paragraph{Comparison to \cite{yury_percolation}} An  information-percolation bound  is also obtained in \cite{yury_percolation} using an argument based on Strong Data-Processing Inequalities (SDPI). The bound of \cite{yury_percolation} applies to general graphs, to general channels, and also to a model with more general vertex labels. In order to compare the bound of \cite{yury_percolation} with Theorem \ref{thm:mainseriesparallelbound}, we state the result of \cite{yury_percolation} for binary labels:

\begin{theorem}
[Theorem 2 in \cite{yury_percolation} for spins] 
\label{thm:yurytheorem}
Let $(X,Y)$ be a spin synchronization model on a graph $G$. Then, for all $u \in V(G), W \subset V(G)$, $$I_{KL}(X_u; X_W | Y) \leq \mathrm{conn}_{G,\gamma}(u,W),$$ where $$\gamma((i,j)) = \eta(Q_{ij})$$ is the SDPI constant for the edge channel $Q_{ij}$. Again, $\mathrm{conn}_{G,\gamma}(u,W)$ denotes the probability that $u$ is in the same open component as some $w \in W$ in a bond percolation on $G$ where each edge $e$ is independently open with probability $\gamma(e)$.
\end{theorem}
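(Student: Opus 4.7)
The plan is to prove the bound by the \emph{information percolation} method: augment the probability space with an independent bond-percolation process $\mathcal{C}$ on $G$ in which each edge $e$ is open with probability $\gamma_e = \eta(Q_e)$, coupled to the edge observations so that all the information in $Y_e$ about $(X_i, X_j)$ is localized on the event $\{e \in \mathcal{C}\}$. Averaging over $\mathcal{C}$ will then yield the $\mathrm{conn}$-bound.

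For each edge $e$, I would construct a joint law on $(Y_e, B_e)$ with $B_e \sim \mathrm{Ber}(\gamma_e)$ independent of the spins $X$, such that (i) the marginal $P(Y_e \mid X_i X_j) = Q_e(\cdot \mid X_i X_j)$, and (ii) conditionally on $\{B_e = 0\}$, $Y_e$ is independent of $X_i X_j$. Concretely, this corresponds to writing $Q_e = \gamma_e\, R_e + (1 - \gamma_e)\, R_e^0$ as a convex combination of a ``signal'' channel $R_e$ (active when $B_e = 1$) and an input-independent ``blank'' channel $R_e^0$ (active when $B_e = 0$). Taken independently across edges, this defines $\mathcal{C} := \{e : B_e = 1\}$, an independent bond percolation on $G$ with the claimed edge-retention probabilities.

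Conditional on $B := (B_e)_{e \in E(G)}$, the closed-edge observations are independent of $X$ by (ii), so the posterior $P(X \mid Y, B = b)$ depends only on $\{Y_e : B_e = 1\}$ and therefore factorizes across the connected components of the open subgraph $\mathcal{C}_b$. In particular, $I_{KL}(X_u; X_W \mid Y, B = b) = 0$ whenever $u$ and $W$ lie in distinct components of $\mathcal{C}_b$. Because $B$ is independent of $(X, Y)$, $I_{KL}(X_u; X_W \mid Y) = I_{KL}(X_u; X_W \mid Y, B)$; writing the right-hand side as $\EE_B[I_{KL}(X_u; X_W \mid Y, B = b)]$ and combining the vanishing on the disconnection event with the trivial upper bound $I_{KL}(X_u; X_W \mid Y, B = b) \leq \log 2$ on the connection event gives
$$I_{KL}(X_u; X_W \mid Y) \leq \PP\bigl(u \leftrightarrow W \text{ in } \mathcal{C}\bigr) = \mathrm{conn}_{G, \gamma}(u, W).$$

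The technical heart of the argument---and the main obstacle---is the per-edge coupling. The mixture decomposition $Q_e = \gamma_e R_e + (1 - \gamma_e) R_e^0$ with input-independent $R_e^0$ is valid only when $\gamma_e \geq d_{\mathrm{TV}}(Q_e(\cdot \mid +1), Q_e(\cdot \mid -1))$, i.e., $\gamma_e$ is at least the Dobrushin coefficient of $Q_e$; this can be strictly larger than $\eta_{\mathrm{KL}}(Q_e) = \eta(Q_e)$. To attain the sharper $\eta(Q_e)$-based bound, a more refined coupling is required: one enriches the output alphabet of $Q_e$ with auxiliary randomness so that the ``signal-only'' portion of $Q_e$ on $\{B_e = 1\}$ captures exactly the KL contraction $\eta_{\mathrm{KL}}$. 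Equivalent formulations work directly with the data-processing inequality on posterior log-likelihood ratios and exploit the variational characterization of $\eta_{\mathrm{KL}}$; this is the technical core of the Polyanskiy--Wu argument.
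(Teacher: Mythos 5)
First, a point of reference: the paper does not prove this statement. Theorem \ref{thm:yurytheorem} is quoted from \cite{yury_percolation} purely for comparison with Theorem \ref{thm:mainseriesparallelbound}, so there is no in-paper proof to match your argument against; the comparison below is with the argument of the cited reference.

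Your endgame has the right shape: condition on an independent bond percolation $B$, use the component-wise factorization of the posterior to get $I_{KL}(X_u;X_W\mid Y,B=b)=0$ off the connection event, and bound the information by one bit on it. (A small repairable slip: $B$ is \emph{not} independent of $(X,Y)$ in your construction, only of $X$; the correct chain is $I_{KL}(X_u;X_W\mid Y)=I_{KL}(X_u;X_W,Y)\le I_{KL}(X_u;X_W,Y\mid B)$, using $X_u\indy Y$ and $X\indy B$.) The genuine gap is exactly where you flag it, and the fix you propose cannot work. The mixture decomposition $Q_e=\gamma_e R_e+(1-\gamma_e)R_e^0$ with input-independent $R_e^0$ exists precisely when $\gamma_e\ge d_{\mathrm{TV}}\bigl(Q_e(\cdot\mid+1),Q_e(\cdot\mid-1)\bigr)=\eta_{\mathrm{TV}}(Q_e)$, and one always has $\eta_{\mathrm{KL}}(Q_e)\le\eta_{\mathrm{TV}}(Q_e)$, typically strictly: for a BSC these are $(1-2\eps)^2$ versus $1-2\eps$, which is exactly the gap between the Kesten--Stigum-type threshold the theorem is meant to capture and the trivial bound. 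Moreover, no ``more refined coupling'' on an enriched alphabet can achieve $\gamma_e=\eta_{\mathrm{KL}}(Q_e)$: any coupling of the required form exhibits $Q_e$ as a degradation of the erasure channel with survival probability $\gamma_e$, and degradation by an erasure channel is \emph{equivalent} to the mixture decomposition, hence forces $\gamma_e\ge\eta_{\mathrm{TV}}(Q_e)$. The argument in \cite{yury_percolation} is therefore not a coupling or simulation argument at all. It replaces stochastic domination by the strictly weaker \emph{less noisy} preorder --- the erasure channel with survival probability $\delta$ is less noisy than $Q_e$ if and only if $\delta\ge\eta_{\mathrm{KL}}(Q_e)$ --- together with a tensorization/peeling step (an SDPI with side information applied edge by edge) showing that less-noisy domination of each edge channel implies the percolation recursion for the whole network. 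That is an inequality between mutual informations, not a pathwise construction, and it is precisely the step your proposal defers to the reference rather than supplies. As written, your argument proves the theorem only with $\gamma(e)=\eta_{\mathrm{TV}}(Q_e)$ in place of $\eta(Q_e)$.
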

When the channels $Q$ are symmetric in the sense of Theorem \ref{thm:mainsymmetricbound}, the SDPI constant $\eta(Q_{ij})$ equals the Chi-squared mutual information $I_2(X_i; X_j | Y_{ij})$, and therefore Theorem \ref{thm:yurytheorem} almost generalizes Theorem \ref{thm:mainsymmetricbound}, besides for a minor multiplicative factor of 2 resulting from the relationship between $I_{KL}$ and $I_2$ (see Proposition \ref{prop:evanschi2vsklonradhalf}).

However, when the channels $Q$ are asymmetric, $\eta(Q_{ij})$ can be significantly larger than $I_2(X_i; X_j | Y_{ij})$. As a result, our subadditivity bound in Theorem \ref{thm:mainseriesparallelbound} can be used to show that the bound of Theorem \ref{thm:yurytheorem} is loose. We demonstrate this explicitly in Section \ref{sec:generalizedbot} below, by analyzing a spin synchronization problem on a tied-tree model.

\section{Synchronization on trees: a generalization of  broadcasting on trees}

In this section, we first show how the classical broadcasting on tree (BOT) problem of \cite{evans} can be expressed as a synchronization problem on a series-parallel graph consisting of a tied-tree with symmetric channels (Section \ref{sec:botapplication}). Applying our Theorem \ref{thm:mainseriesparallelbound}, we then obtain an alternative proof of the impossibility result of \cite{evans} for BOT reconstruction.

We then generalize this BOT model  (Section \ref{sec:generalizedbot}), using more general channels in the  synchronization on tree (SOT) model. For this generalized BOT, we compare the information bound given by our Theorem \ref{thm:mainseriesparallelbound} based on the Chi-squared subadditivity with the information bound given by Theorem \ref{thm:yurytheorem} from \cite{yury_percolation} based on the SDPI constant.

\subsection{Broadcasting on trees}\label{bot}
In the BOT model of \cite{evans}, a random variable is broadcast from the root down the edges of a tree, with each edge flipping the variable with some probability. The goal is to reconstruct the root variable by observing of the variables of some of its descendants. Formally, each vertex $v \in V(T)$ of a tree $T$ has a binary hidden label $\sigma_v$. The hidden labels are assigned by letting the root $\rho$ have spin $\sigma_{\rho} \sim \Rad(1/2)$, and by defining edge labels $\{\eta_e\} \stackrel{i.i.d.}{\sim} \Rad(\varepsilon)$, and letting $$\sigma_{v} = \sigma_{\rho} \prod_{e} \eta_e,$$ where the product is over the edges in the path from $\rho$ to $v$.

In \cite{evans}, the following information subadditivity inequality is proved, for any finite set of vertices $W \subset V(T)$: \begin{equation}\label{eq:evanssubadditivity}I_{KL}(\sigma_{\rho}; \sigma_W) \leq \sum_{w \in W} I_{KL}(\sigma_{\rho}; \sigma_w).\end{equation} This suffices to show that if $T$ is infinite and has percolation threshold $p_c(T)$, then, when $(1-2\varepsilon)^2 < p_c(T)$, it is impossible to reconstruct $\sigma_{\rho}$ from observations of the leaf variables at infinite depth.


\subsection{Reduction from SOT to BOT}\label{sec:botapplication}

The BOT model with noise parameter $\e$ and tree $T$ that generates the random variables $(\sigma, \eta)$ is equivalent to the synchronization on tree (SOT) model with noise parameter $\e$ and tree $T$ that generates the random variables $(X,Y)$, where the vertex labels $\{X_v\}$ are i.i.d.\ $\Rad(1/2)$, and the edge channels $\{Q_{ij}\}$ are binary symmetric channels (i.e., $Y_{ij} = X_i \cdot X_j \cdot Z_{ij}$, where the $Z_{ij}$ are i.i.d $\Rad(\varepsilon)$). By equivalent, we mean the following.
\begin{proposition}\label{prop:botspinreconstruction}
In the reconstruction setting just described, for all $W \subset V(T)$, \begin{equation*}I(\sigma_{\rho}; \sigma_W) = I(X_{\rho}; X_W, Y_{E(T)}),\end{equation*} where $I$ is any $f$-mutual information (such as $I_{KL}$ or $I_2$).
\end{proposition}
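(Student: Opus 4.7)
Assume $\rho \notin W$ without loss of generality (if $\rho \in W$ both sides equal $H(X_\rho)$). For each $w \in W$ define the statistic
$$\tilde Y_w := X_w \prod_{e \in \mathrm{path}_T(\rho,w)} Y_e.$$
Substituting $Y_e = X_i X_j Z_e$ (where $Z_e \sim \Rad(\varepsilon)$ is the noise on edge $e$) and telescoping the factors of $X_v$ at interior vertices along the unique tree path, one obtains $\tilde Y_w = X_\rho \prod_{e \in \mathrm{path}(\rho,w)} Z_e$. Since $(Z_e)_{e \in E(T)}$ is i.i.d.\ $\Rad(\varepsilon)$, the joint distribution of $(X_\rho, (\tilde Y_w)_{w\in W})$ is exactly that of $(\sigma_\rho, (\sigma_w)_{w\in W})$ in the BOT model, so $I(X_\rho; \tilde Y_W) = I(\sigma_\rho; \sigma_W)$ for any $f$-mutual information.

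Because $\tilde Y_W$ is a deterministic function of $(X_W, Y_{E(T)})$, the data-processing inequality gives $I(X_\rho; \tilde Y_W) \leq I(X_\rho; X_W, Y_{E(T)})$; the main step is the reverse direction, for which I would show that $\tilde Y_W$ is a sufficient statistic, i.e.\ $X_\rho \perp (X_W, Y_{E(T)}) \mid \tilde Y_W$. To this end, I would use the gauge-symmetry group of the SOT model: for each $s \in \{\pm 1\}^V$ with $s_\rho = 1$, the transformation $X_v \mapsto s_v X_v$ and $Y_e \mapsto s_i s_j Y_e$ (for $e=(i,j)$) preserves the joint distribution (the marginals of $X_v$ are uniform, and the transition density $P_{Y_e|X_iX_j}(y|u)$ depends only on the invariant $y\cdot u$), fixes $X_\rho$, and fixes each $\tilde Y_w$ via the telescoping identity $s_w \prod_{e\in\mathrm{path}(\rho,w)} s_i s_j = s_\rho s_w^2 = 1$.

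To promote these symmetries to full sufficiency, I would check that any two observations $(x_W, y_E)$ and $(x'_W, y'_E)$ with $\tilde y_W = \tilde y'_W$ lie in a common orbit of this action: because $T$ is a tree, one can recursively define $s$ by $s_\rho = 1$ and $s_v = s_{\mathrm{parent}(v)} \cdot (y'_e/y_e)$ along the edge $e$ joining $v$ to its parent. This yields $s_w = \prod_{e\in\mathrm{path}(\rho,w)} y'_e/y_e$, which equals $x'_w/x_w$ precisely when $\tilde y_w = \tilde y'_w$. Hence the conditional law of $X_\rho$ given $(X_W, Y_{E(T)})$ depends only on $\tilde Y_W$; the equality case of the $f$-mutual-information data-processing inequality then yields $I(X_\rho; X_W, Y_{E(T)}) = I(X_\rho; \tilde Y_W) = I(\sigma_\rho; \sigma_W)$. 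The main obstacle is the orbit-reachability step, which crucially uses that $T$ is a tree so that the recursion defining $s$ is not obstructed by cycle constraints.
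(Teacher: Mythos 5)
Your proof is correct, but the reverse inequality takes a genuinely different route from the paper's. The forward direction is the same in both: extract $\tilde Y_w = X_w\prod_e Y_e$ (the paper's $\sigma'_w$), note that $(X_\rho,\tilde Y_W)$ has the same joint law as $(\sigma_\rho,\sigma_W)$, and apply data processing. For the reverse, the paper builds a second coupling: it draws $Y'_e\stackrel{\mathrm{i.i.d.}}{\sim}\Rad(1/2)$ independent of $\sigma$, defines $X'_\rho=\sigma_\rho$ and $X'_w=\sigma_w\prod_e Y'_e$, observes that $(X'_\rho,X'_W,Y')$ has the same law as $(X_\rho,X_W,Y_{E(T)})$, and applies data processing a second time to the map $(\sigma_\rho,\sigma_W,Y')\mapsto(X'_\rho,X'_W,Y')$. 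You instead prove directly that $\tilde Y_W$ is a sufficient statistic for $X_\rho$ given $(X_W,Y_{E(T)})$, by exhibiting the gauge group $\{s\in\{\pm1\}^{V(T)}:s_\rho=1\}$ acting via $X_v\mapsto s_vX_v$, $Y_e\mapsto s_is_jY_e$, showing it fixes $X_\rho$ and each $\tilde Y_w$ and acts transitively on the fibers of $\tilde Y_W$, and then invoking the equality case of the $f$-divergence data-processing inequality. Both arguments are valid. The paper's second coupling is shorter and sidesteps the need to verify orbit transitivity; your sufficiency-via-gauge-invariance argument is more elaborate but conceptually sharper --- it shows $\tilde Y_W$ is a genuinely sufficient statistic rather than merely an information-preserving reduction in one direction --- and your remark that the tree structure keeps the recursion defining $s$ unobstructed is exactly the right place to localize the use of acyclicity (the paper's reverse coupling uses it implicitly in the same way).
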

\begin{proof}
First, define $\sigma'_{\rho} = X_{\rho}$ and $\sigma'_w = X_w \cdot \prod_e Y_e$ for all $w \in W$, where the product is over the edges on the path from $\rho$ to $w$. By data-processing and because $\sigma'_{W \cup \{\rho\}} \stackrel{d}{=} \sigma_{W \cup \{\rho\}}$, we have $I(X_{\rho}; X_W, Y) \geq I(\sigma'_{\rho}; \sigma'_W) = I(\sigma_{\rho}; \sigma_W)$.

Conversely, let $X'_{\rho} = \sigma_{\rho}$, $Y'_e \stackrel{i.i.d}{\sim} \Rad(1/2)$, and $X'_{w} = \sigma_{w} \cdot \prod_{e} Y'_e$. By data-processing and equality of distributions, $I(\sigma_{\rho}; \sigma_{W}) = I(\sigma_{\rho}; \sigma_{W}, Y') \geq I(X'_{\rho}; X'_{W}, Y') = I(X_{\rho}, X_W, Y)$.
\end{proof}

Since we wish to apply Theorem \ref{thm:mainseriesparallelbound} in order to bound $I(X_{\rho}; X_W, Y_{E(T)})$, and since we expressed our theorem in terms of  series-parallel graphs with two terminals, we first note that can consider an equivalent varient of SOT using a series-parallel graph by tying the leaves to a terminal with noiseless channels. Namely,  used $(X_{V(T)}, Y_{E(T)})$ to construct a spin synchronization model $(X_{V(G_W)}, Y_{E(G_W)})$ on a series-parallel graph $G_W$, where $W$ are the leaves of the tree $T$, and $G_W$ extends the tree by adding a new vertex $v$ adjacent to all $w \in W$ with noiseless edge channels between any $w$ and $v$. We call $G_W$ a ``tied-tree'' because the leaves $W$ of the tree $T$ are ``tied'' together in $G_W$ through their connections to vertex $v$. 

It is easy to show via induction on the number of edges that tied-trees are indeed series-parallel graphs whose two terminals are the root of the original tree, and the new added vertex connecting the leaves. So the bound of Theorem \ref{thm:mainseriesparallelbound} applied to $G_W$ yields
\allowdisplaybreaks 
\begin{align*}I_2(X_{\rho}; X_v, Y_{E(G_W)}) &= I_2(X_{\rho}; X_v | Y_{E(G_W)}) \tag{Prop. \ref{prop:botspinreconstruction}} \\ &\leq \sum_{P \in \cP_{G_W}(\rho,v)} I_2(X_{\rho}; X_v | Y_{E(P)}) \tag{Theorem \ref{thm:mainseriesparallelbound}} \\ &= \sum_{w \in W} \sum_{P \in \cP_{T}(\rho,w)} I_2(X_{\rho}; X_v | Y_{E(P) \cup (w,v)}) \\ &= \sum_{w \in W} \sum_{P \in \cP_{T}(\rho,w)} I_2(X_{\rho}; X_w | Y_{E(P) \cup (w,v)}) \tag{Since $Y_{(w,v)}$ is noiseless} \\ &= \sum_{w \in W} \sum_{P \in \cP_{T}(\rho,w)} I_2(X_{\rho}; X_w | Y_{E(P)}) \tag{Since $Y_{(w,v)} \indy X_{\rho}, X_w, Y_{E(P)}$} \\ &= \sum_{w \in W} \sum_{P \in \cP_{T}(\rho,w)} I_2(X_{\rho}; X_w | Y_{E(T)}) \tag{Since $Y_{E(T) \sm E(P)} \indy Y_{E(P)}, X_{\rho}, X_w$} \\ &= \sum_{w \in W} \sum_{P \in \cP_{T}(\rho,w)} I_2(X_{\rho}; X_w, Y_{E(T)}) \tag{Prop. \ref{prop:i2conditionequalsi2side}} \\ &= \sum_{w \in W} I_2(\sigma_{\rho}; \sigma_w) \tag{By Prop. \ref{prop:botspinreconstruction} and uniqueness of paths in tree}  \end{align*}

We conclude by noting that since $Y_{\{(w,v)\}_{w \in W}} \indy X_{\rho} | X_W, Y_{E(T)}$, $$I_2(X_{\rho}; X_v, Y_{E(G_W)}) = I_2(X_{\rho}; X_W, Y_{E(T)}, Y_{\{(w,v)\}_{w\in W}}) = I_2(X_{\rho}; X_W, Y_{E(T)}) = I_2(\sigma_{\rho}; \sigma_W).$$

Hence, we have re-derived the BOT subadditivity inequality \eqref{eq:evanssubadditivity} for the Chi-squared mutual information (and also for the KL mutual information with a factor of 2 by Proposition \ref{prop:evanschi2vsklonradhalf}). This in turn gives the impossibility result of \cite{evans}.

\subsection{Asymmetric SOT and Chi-squared v.s.\ SDPI}\label{sec:generalizedbot}
In Section \ref{sec:botapplication}, we formulated the spin broadcasting on trees problem of \cite{evans} as a spin synchronization problem on a tied-tree graph, with binary symmetric edge channels. If we relax the edge channels to allow for general (possibly asymmetric) channels, we thus obtain a generalization of the broadcasting on trees problem. Notice that the path subadditivity bound of Theorem \ref{thm:mainseriesparallelbound}  still applies to this generalized model, as the tied-tree is series-parallel.  

In particular, suppose we are given a spin synchronization model $(X,Y)$ on a tied-tree $G$ with root $u$ and terminal $v$ similarly to previous section, but we now take the edge channels $Q$ on the underlying tree to be asymmetrical, i.e., $Q_{ij}(\cdot | +1) \sim \Ber(a/n)$ and $Q_{ij}(\cdot |-1) \sim \Ber(b/n)$, and the leaves of the tree are tied together at $v \in V(G)$ with noiseless edge channels. Suppose moreover that the tree is a regular tree with $d=n$ descendants at each generation and depth $t$. 

For any edge $(i,j)$, we have $$I_2(X_i; X_j | Y_{ij}) = \frac{(a-b)^2}{2(a+b)n} + \frac{(a-b)^2}{n^2(1-(a+b)/(2n))} = \frac{(a-b)^2}{2(a+b)n} + o(1/n)$$ by direct calculation using Proposition \ref{prop:correqualschi2}. Hence, by Theorem \ref{thm:mainseriesparallelbound}, we have the bound \begin{align}I_2(X_u; X_v | Y_{E(G)}) &\leq \sum_{P \in \cP_{G}(u,v)} I_2(X_u; X_v | Y_{E(P)}) \tag*{} \\ &= \sum_{P \in \cP_{G}(u,v)} \left(\frac{(a-b)^2 + o(1)}{2(a+b)n}\right)^{|E(P)|}\label{eq:ourresontiedtree}\\
&=d^t \left(\frac{(a-b)^2}{2(a+b)n} + o(1/n)\right)^{t}\\
&=\left(\frac{(a-b)^2}{2(a+b)} + o_n(1)\right)^{t}\label{last1}
.\end{align}

On the other hand, as derived in \cite{polyanskiy2017strong}, the SDPI constant for the edge channels is $$\eta(Q_{ij}) = \frac{(\sqrt{a}-\sqrt{b})^2 }{n}  + o(1/n).$$ So letting $\gamma((i,j)) = \eta(Q_{i,j})$ for all $(i,j) \in E(G)$, the bound of \cite{yury_percolation} (Theorem \ref{thm:yurytheorem}) gives \begin{align}I_2(X_u; X_v | Y_{E(G)}) &\leq \mathrm{conn}_{G,\gamma}(u,v) \label{eq:yuryontiedtree} \end{align} And by inclusion-exclusion, \begin{align}\mathrm{conn}_{G,\gamma}(u,v) &\geq \sum_{P \in \cP_{G}(u,v)} \gamma^{|E(P)|} - \sum_{P_1\neq P_2 \in \cP_{G}(u,v)} \gamma^{|E(P_1) \cup E(P_2)|} \\ &= d^t \gamma^t - \frac{d^t \gamma^t}{2} \cdot \left(\sum_{k=1}^t\gamma^k (d-1) \cdot d^{k-1}\right) \\ &= d^t \gamma^t \left(1 - \frac{(d-1)(1-(\gamma d)^{t+1})}{2d(1-\gamma d)}\right) \\
&\geq \left((\sqrt{a}-\sqrt{b})^2 + o_n(1)\right)^{t} \left(1 - \frac{1}{2}\left((\sqrt{a}-\sqrt{b})^2 + o_n(1)\right)^{t+1}\right)
\end{align}
Note that for any $a,b>0$, 
$$ (\sqrt{a} - \sqrt{b})^2 \ge \frac{(a-b)^2}{2(a+b)},$$ (and the inequality is strict when $a \neq b$), so when $(\sqrt{a}-\sqrt{b})^2 < 1$ and $t$ and $n$ are sufficiently large, the bound \eqref{eq:ourresontiedtree} derived from Theorem \ref{thm:mainseriesparallelbound} is tighter than the bound \eqref{eq:yuryontiedtree} implied by Theorem \ref{thm:yurytheorem} from \cite{yury_percolation}.

Further, applying a union bound on the paths in order to upper-bound the connection probability in the percolation of \cite{yury_percolation}, we have:
\begin{align}I_2(X_u; X_v | Y_{E(G)}) &\leq \mathrm{conn}_{G,\gamma}(u,v) \\ &\leq \sum_{P \in \cP_G(u,v)} \gamma^{|E(P)|} \\ &=
d^t \cdot \gamma^t \\ &= \left((\sqrt{a}-\sqrt{b})^2 + o_n(1)\right)^t \label{last22}
\end{align}
Therefore, taking now $$(\sqrt{a}-\sqrt{b})^2>1>\frac{(a-b)^2}{2(a+b)},$$
and $t$ such that $t=O(n)$ and $t = \omega_n(1)$ (e.g., $t=n$), we have that \eqref{last1} vanishes as $n$ diverges, and our bound implies that $u$ and $v$ cannot be synchronized non-trivially (a.k.a.\ reconstruction/weak recovery is not solvable). In contrast, \eqref{last22} blows up as $n$ diverges, i.e., \eqref{last22} tends to 1 (as one can always cap the upper-bound to 1).

\section{Proof of Theorem \ref{thm:mainseriesparallelbound}}
The result is obtained by using (i) a multiplicative property of the Chi-squared mutual information on paths (see Proposition \ref{prop:chi2seriesmultiplicativity}), and (ii) a subadditivity property of the Chi-squared mutual information on depth-1 trees (see Lemma \ref{lem:chi2parallelsubadditivity}). Interestingly, (i) does not hold for the classical mutual information, $I_{KL}$, making the Chi-squared mutual information, $I_2$, a natural choice for this proof:

\begin{proof}[Proof (of Theorem \ref{thm:mainseriesparallelbound})]
In the following, we implicitly use $I_2(X_kX_l; Y) = I_2(X_k; X_l | Y)$, by Proposition \ref{prop:i2equalsprod}.

The proof is by induction on $|E(G)|$. The base case, $|E(G)| = 1$, is trivial.
For the inductive step, one of two cases holds: 

(Case 1) $G$ is the series composition of $H_1$ which is series-parallel with terminals $u,w$, and $H_2$, which is series-parallel with terminals $w,v$. \begin{align*}&I_2(X_u X_v ; Y_{E(G)}) = I_2((X_u X_w) \cdot (X_w X_v) ; Y_{E(H_1)}, Y_{E(H_2)}) \\ &= I_2(X_u X_w; Y_{E(H_1)}) I_2(X_w X_v; Y_{E(H_2)}) \tag*{(Prop. \ref{prop:chi2seriesmultiplicativity})} \\ &\leq \textstyle\sum_{\substack{P_1 \in \cP_{H_1}(u,w) \\ P_2 \in \cP_{H_2}(w,v)}} I_2(X_uX_w; Y_{E(P_1)}) I_2(X_wX_v; Y_{E(P_2)}) \\ &= \textstyle\sum_{\substack{P_1 \in \cP_{H_1}(u,w) \\ P_2 \in \cP_{H_2}(w,v)}} I_2(X_uX_v; Y_{E(P_1)}, Y_{E(P_2)}) \tag*{(Prop. \ref{prop:chi2seriesmultiplicativity})} \\ &= \textstyle \sum_{P \in \cP_{G}(u,v)} I_2(X_uX_v; Y_{E(G)}). \end{align*}
The inequality is by the inductive hypothesis.

(Case 2) $G$ is the parallel composition of $H_1$ and $H_2$ both series-parallel, with terminals $u,v$.
Then, \begin{align*}I_2&(X_uX_v; Y_{E(G)}) = I_2(X_uX_v; Y_{E(H_1)}, Y_{E(H_2)}) \\ &\leq I_2(X_uX_v; Y_{E(H_1)}) + I_2(X_uX_v; Y_{E(H_2)}) \tag*{(Lem. \ref{lem:chi2parallelsubadditivity})} \end{align*}
The inductive step follows by the inductive hypothesis, since $\cP_G(u,v) = \cP_{H_1}(u,v) \sqcup \cP_{H_2}(u,v)$.
\end{proof}

\section{Future directions}\label{sec:futuredirections}

\paragraph{Connections with correlation decay} As mentioned in the introduction, fixing the edge observations and applying Ising model correlation decay conditions yields bounds that are not as strong as those we proved in this paper, because the techniques in our paper allow us to deal with the average-case edge observations, while fixing the edge observations and applying the Dobrushin conditions requires us to work with the worst-case edge observations. It would nonetheless be interesting to elaborate on this connection. 


\paragraph{Information subadditivity for general graphs}
A possible extension would be to prove that Theorem \ref{thm:mainseriesparallelbound} applies to spin synchronization models on general graphs $G$, that have general asymmetric edge channels.
\begin{conjecture}[Generalization of Theorem \ref{thm:mainseriesparallelbound}] \label{conj:mainseriesparallelboundgeneralization}
Let $(X,Y)$ be a spin synchronization model on {\em any} graph $G$, and arbitrary edge channels $Q$. Then, for any $u,v \in V(G)$, $$I_2(X_u; X_v \mid Y_{E(G)}) \leq \sum_{P \in \cP_G(u,v)} I_2(X_u; X_v \mid Y_{E(P)}).$$ Here $\cP_G(u,v)$ is the set of paths (self-avoiding walks) from $u$ to $v$ in $G$.
\end{conjecture}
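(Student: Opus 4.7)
The plan is to extend the series-parallel proof to general graphs by introducing a new ingredient that handles the cycle structure of graphs that are not series-parallel. Since the series-parallel argument of Theorem~\ref{thm:mainseriesparallelbound} rests on two primitives --- multiplicativity of $I_2$ under series composition (Proposition~\ref{prop:chi2seriesmultiplicativity}) and parallel subadditivity (Lemma~\ref{lem:chi2parallelsubadditivity}) --- a genuinely new ``cycle inequality'' is needed to handle graphs such as the Wheatstone bridge ($K_4$ minus an edge). I would attack the problem by induction on graph complexity, with the Wheatstone bridge as the first nontrivial base case.

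First, reduce to $2$-connected graphs by decomposing $G$ into its blocks and invoking multiplicativity of $I_2$ across cut vertices. Next, attempt the Wheatstone bridge directly: using the identity $I_2(X_u; X_v \mid Y) = \EE_Y[\rho(X_u, X_v \mid Y)^2]$ for binary spins (which holds since the global sign-flip symmetry $X \mapsto -X$ of the spin synchronization model makes all conditional marginals uniform, combined with Proposition~\ref{prop:correqualschi2}), the LHS is an expected squared Ising correlation, while the four $u$-$v$ paths in the Wheatstone bridge supply four explicit product terms on the RHS. The aim is to expand the conditional correlation --- for instance via a random-current or high-temperature expansion of the Ising measure induced by $Y$ --- and charge each walk-level contribution to one of the four paths. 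A successful calculation here would likely reveal the combinatorial inequality needed for general graphs, to be used as the inductive step for graphs built from series-parallel pieces by successively adding chord edges.

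A complementary route, potentially cleaner if it can be made to work, is symmetrization at the channel level. For each asymmetric edge channel $Q_{ij}$, augment its output by an independent uniform bit $T_{ij} \sim \Rad(1/2)$ and define the symmetrized channel $\tilde{Q}_{ij}(y, t \mid s) = \tfrac{1}{2} Q_{ij}(y \mid s t)$. A direct calculation shows that $\tilde{Q}_{ij}$ is symmetric in the sense of Theorem~\ref{thm:mainsymmetricbound} (under the involution $(y, t) \mapsto (y, -t)$) and has the same per-edge $I_2$ as $Q_{ij}$. Then Corollary~\ref{cor:subadditivitybound} applies to the symmetrized model, so the conjecture would reduce to the single comparison $I_2(X_u; X_v \mid \tilde{Y}_{E(G)}) \geq I_2(X_u; X_v \mid Y_{E(G)})$, where $\tilde{Y}$ denotes the symmetrized observations.

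The hardest step, in either route, is handling the cycle structure in the asymmetric setting. The symmetric-channel proof of Theorem~\ref{thm:mainsymmetricbound} relies on a global percolation coupling that crucially exploits the per-edge involutions, and no analogue is known asymmetrically. In the symmetrization route, the comparison $I_2(X_u; X_v \mid \tilde{Y}) \geq I_2(X_u; X_v \mid Y)$ is not a direct data-processing step: conditioning on $T_{ij}$ effectively ``twists'' the spin products around cycles in a way that violates the cocycle constraints $\prod_{e \in C}(X_i X_j)_e = 1$ imposed by the original model, so the symmetrized model is not stochastically dominated by the original in any obvious sense --- this comparison may even fail, in which case the route would have to be modified (e.g., by a more subtle augmentation that respects the cocycle structure, or by averaging over $T_{ij}$ with a sign-consistent weighting). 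Establishing either a direct Wheatstone-bridge inequality or the correct coupling between the original and symmetrized models is where I expect the main technical effort to lie, and either would be expected to generalize by induction to the full conjecture.
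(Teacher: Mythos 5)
The statement you are addressing is Conjecture~\ref{conj:mainseriesparallelboundgeneralization}, which the paper explicitly poses as an \emph{open problem} in Section~\ref{sec:futuredirections}; the paper contains no proof of it, and your proposal does not close it either. Both of your routes bottom out at exactly the step that constitutes the conjecture's content. In the first route, the ``cycle inequality'' for the Wheatstone bridge is never stated, let alone proved: you describe an intention to expand the conditional correlation via random currents and ``charge each walk-level contribution to one of the four paths,'' but no such charging scheme is exhibited, and the proposed induction (``adding chord edges to series-parallel pieces'') has no established inductive step --- adding a single chord is precisely the operation that takes you outside the reach of Proposition~\ref{prop:chi2seriesmultiplicativity} and Lemma~\ref{lem:chi2parallelsubadditivity}. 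The block-decomposition reduction to $2$-connected graphs is plausible (it mirrors Case~1 of the proof of Theorem~\ref{thm:mainseriesparallelbound}), but it only relocates the difficulty.

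The second route has a more concrete problem: the comparison $I_2(X_u; X_v \mid \tilde{Y}_{E(G)}) \geq I_2(X_u; X_v \mid Y_{E(G)})$ that you need is likely false, and plausibly in the \emph{wrong direction}. Since $T \indy X$, one has $I_2(X_uX_v; \tilde{Y}) = \EE_t\bigl[\EE[\EE[X_uX_v \mid Y, T = t]^2]\bigr]$, and conditioning on $T = t$ yields the original model with each edge channel precomposed with the input sign $t_e$. Whenever $t$ is a coboundary ($t_{ij} = \sigma_i\sigma_j$ for vertex signs $\sigma$), a gauge transformation $X \mapsto \sigma \odot X$ shows the twisted model is exactly equivalent to the original, so those terms contribute exactly $I_2(X_uX_v; Y)$. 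On a tree every $t$ is a coboundary, so symmetrization is lossless there --- but on a graph with cycles the non-coboundary (frustrated) configurations of $t$ give genuinely different models, and there is no reason to expect them to carry \emph{more} information about $X_uX_v$ than the unfrustrated one; the natural expectation is less. So the reduction to Corollary~\ref{cor:subadditivitybound} requires an average over twisted models to dominate the untwisted one, which you would need to prove and which you yourself concede ``may even fail.'' In short: the per-edge facts you verify (that $\tilde{Q}_{ij}$ is symmetric and preserves the per-edge $I_2$) are correct, but the global comparison is the entire difficulty, and neither route supplies it. The conjecture remains open.
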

In particular, Conjecture \ref{conj:mainseriesparallelboundgeneralization} would  directly imply the two-community stochastic block-model information-theoretic threshold for impossibility of weak recovery, without having to reduce to the broadcasting on trees model.

Another related subadditivity bound that one could hope for is: 
\begin{conjecture}\label{conj:conjecturedsubadditivityvariation} Let $(X,Y)$ be a spin synchronization model on a graph $G$, and arbitrary edge channels $Q$. Then, for any $u \in V(G)$, $W \subset V(G)$,
\begin{equation*}I(X_u; X_W | Y_{E(G)}) \leq \sum_{w \in W} I(X_u; X_w | Y_{E(G)}),\end{equation*} where $I$ is either the Chi-squared mutual information, $I_2$, or the KL mutual information, $I_{KL}$.
\end{conjecture}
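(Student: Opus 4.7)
I would handle the Chi-squared version first and deduce the $I_{KL}$ version up to a multiplicative constant. By the sandwich $\tfrac{1}{2} I_2 \leq I_{KL} \leq I_2$ (Proposition~\ref{prop:evanschi2vsklonradhalf}, using $X_u \sim \Rad(1/2)$), any $I_2$ bound of the desired shape yields the $I_{KL}$ bound with a factor of $2$; the crux is therefore $I_2$. The gauge symmetry $X \to -X$ (which preserves $Y$) makes $X_u$ marginally independent of $Y_{E(G)}$, so
\[
I_2(X_u; Z \mid Y_{E(G)}) \;=\; \EE\!\bigl[\EE[X_u \mid Z, Y_{E(G)}]^2\bigr]
\]
for every $Z$. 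Writing $H_S$ for the $L^2$-subspace of square-integrable functions of $(X_S, Y_{E(G)})$, the conjecture becomes the Hilbert-space statement
\[
\bigl\|P_{H_W}(X_u)\bigr\|^2 \;\leq\; \sum_{w \in W}\bigl\|P_{H_{\{w\}}}(X_u)\bigr\|^2.
\]

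\textbf{First attempt: induction via the $I_2$ Pythagorean identity.} Split $W = \{w\} \sqcup W'$ and use
\[
I_2(X_u; X_W \mid Y_{E(G)}) \;=\; I_2(X_u; X_w \mid Y_{E(G)}) + \bigl\|P_{H_W}(X_u) - P_{H_{\{w\}}}(X_u)\bigr\|^2.
\]
The goal reduces to bounding the residual by $\sum_{w' \in W'} I_2(X_u; X_{w'} \mid Y_{E(G)})$. The natural hope is to interpret the residual as $I_2(\tilde X_u; X_{W'} \mid \ldots)$ for the auxiliary centered variable $\tilde X_u := X_u - \EE[X_u \mid X_w, Y_{E(G)}]$, and invoke the inductive hypothesis; the obstruction is that $\tilde X_u$ is no longer Rademacher, hence falls outside the spin-synchronization class, so the inductive hypothesis does not apply off the shelf.

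\textbf{Second attempt and main obstacle.} A second attempt leverages the sign symmetry $X_W \to -X_W$ (which preserves the conditional law of $X_W$ given $Y_{E(G)}$), forcing $g(x_W, y) := \EE[X_u \mid X_W = x_W, Y_{E(G)} = y]$ to be odd in $x_W$. Decomposing $g$ in any basis of $L^2$-functions on $\{\pm 1\}^W$ adapted to the parity structure and computing $\|g\|^2$ against the joint law, the sign symmetry kills cross terms between components of opposite parity; however, cross terms between distinct odd-parity components survive, and they are governed by the non-product conditional law $P_{X_W \mid Y_{E(G)}}$. This is the heart of the difficulty: cycles in $G$ create nontrivial correlations among $\{X_w\}_{w \in W}$ in the posterior, so the surviving cross terms may have either sign and obstruct a Bessel-style bound. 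On trees the posterior factorizes along the tree and the cross terms become tractable, recovering Evans~\cite{evans} (as in Section~\ref{sec:botapplication}); on a general graph a new idea is needed. Naive tree-reductions do not help: passing to a spanning tree loses information about $Y$, pushing the inequality in the wrong direction, and lifting to the universal cover of $G$ is not monotone for $I_2$.

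\textbf{Routes forward.} I would first test the conjecture on the simplest nontrivial instances, namely $|W| = 2$ on a triangle or a 4-cycle with symmetric binary channels, where the posterior admits closed-form computation; this should either produce a counterexample or reveal the mechanism of cancellation among cross terms. A more structural route is a coupling/interpolation argument comparing the Ising posterior induced by the spin-synchronization model to a tree-indexed process, exploiting gauge symmetry and edge-wise factorization to reduce to Evans-type subadditivity on the tree; such an argument might first yield a weaker statement (e.g., with an $O(1)$ or $O(\log|W|)$ multiplicative slack), which would itself be of interest and could follow from a percolation/union-bound argument in the spirit of Theorem~\ref{thm:yurytheorem}. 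An orthogonal and perhaps more tractable target is bounded-treewidth graphs, where one may hope to iterate Theorem~\ref{thm:mainseriesparallelbound} along a tree decomposition.
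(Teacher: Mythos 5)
This statement is Conjecture~\ref{conj:conjecturedsubadditivityvariation}, posed in the paper's ``Future directions'' section as an open problem; the paper gives no proof of it, so there is no argument of ours to compare yours against. Your writeup, appropriately, does not claim a proof either: it is an exploration that stalls. To be explicit about the gap, neither of your two attempts closes. The first (Pythagorean splitting $I_2(X_u;X_W\mid Y)=I_2(X_u;X_w\mid Y)+\|P_{H_W}(X_u)-P_{H_{\{w\}}}(X_u)\|^2$ plus induction) fails because, as you note, the residual variable $X_u-\EE[X_u\mid X_w,Y]$ is not a spin, so the inductive hypothesis has no purchase. The second correctly reduces the problem to controlling cross terms between distinct odd-degree components of $\EE[X_u\mid X_W,Y]$ under the posterior $P_{X_W\mid Y}$, and correctly identifies that on a graph with cycles this posterior is not a product (nor tree-Markov), so those cross terms can a priori have either sign; no mechanism for their cancellation is supplied. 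That identification of the obstruction is the honest and, as far as we know, accurate state of affairs.

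The technical assertions you do make along the way are sound: the factor-of-$2$ transfer between $I_2$ and $I_{KL}$ via Proposition~\ref{prop:evanschi2vsklonradhalf} (noting it yields the $I_{KL}$ statement only up to that constant, not as conjectured), the identity $I_2(X_u;Z\mid Y)=\EE[\EE[X_u\mid Z,Y]^2]$ from gauge invariance (cf.\ Propositions~\ref{prop:correqualschi2} and~\ref{prop:i2conditionequalsi2side}), and the oddness of $\EE[X_u\mid X_W=x_W,Y]$ in $x_W$ from the global flip $X\to-X$. Your suggestion to first test $|W|=2$ on a triangle or $4$-cycle is well taken: the paper itself exhibits failures of the analogous subadditivity for non-uniform spins and for $\ZZ/4\ZZ$ labels, so one should not take the conjecture for granted even in the uniform binary case. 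The bounded-treewidth target is also reasonable, though note that Theorem~\ref{thm:mainseriesparallelbound} bounds $I_2(X_u;X_v\mid Y)$ by a sum over \emph{paths}, which is a different right-hand side from the sum over \emph{terminal vertices} appearing here, so iterating it along a tree decomposition would require an additional comparison step.
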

Applied to spin synchronization models on trees, Conjecture \ref{conj:conjecturedsubadditivityvariation} would (also) directly imply the information subadditivity inequality \eqref{eq:evanssubadditivity} from \cite{evans} for broadcasting on trees.

\paragraph{Information subadditivity for non-uniform binary spins}
The information subadditivity inequality of Theorem \ref{thm:mainseriesparallelbound} does not hold when the spin synchronization model is generalized to allow for non-uniform binary spins.

For example, consider a graph $G$ with two nodes $u,v$, and two parallel edges $e,f$ between the nodes. Suppose we have vertex labels $X_u, X_v \stackrel{i.i.d}{\sim} \Rad(\delta)$, and edge labels $Y_e = X_uX_v Z_e$ and $Y_f = X_uX_v Z_f$, where $Z_e,Z_f \stackrel{i.i.d} \sim \Rad(\epsilon)$. Then, by direct calculation
$$I_2(X_u; X_v | Y_e, Y_f) = \frac{\delta^2(1-\delta)^2(1-2\eps)^2 (\eps^2 + (1-\eps)^2)^3}{(-4\delta^2\eps^2+4\delta^2\eps-\delta^2+4\delta\eps^2-4\delta\eps+\delta+\eps^4-2\eps^3+\eps^2)^2},$$ and $$I_2(X_u; X_v | Y_e) = I_2(X_u; X_v | Y_f) = \frac{\delta^2(1-\delta)^2(1-2\eps)^2}{(-4\delta^2\eps^2+4\delta^2\eps-\delta^2+4\delta\eps^2-4\delta\eps+\delta-\eps^2+\eps)^2}.$$

When $\delta = 0.2$ and $\eps = 0.2$, one may check that $I_2(X_u; X_v | Y_e, Y_f) \geq I_2(X_u; X_v | Y_e) + I_2(X_u; X_v | Y_f)$, contradicting subadditivity.

This leaves open the question of whether there exists a natural generalization of the subadditivity theorem that holds when the vertex labels are non-uniform (possibly with additional factors). 

\paragraph{Information subadditivity for general alphabets} Another way to generalize the vertex labels of the spin synchronization model is to draw them from larger alphabets than the binary alphabet. For example, as in \cite{our_grid}, one may consider vertex labels $X_v$ that are uniformly chosen from a group $\cG$, and edge labels $Y_{uv}$ that are noisy observations of the difference of the endpoints, $X_vX_u^{-1}$.

However, the subadditivity of the Chi-squared information over paths does not extend in a direct way to these larger alphabets. For example, consider a ``spoon''-shaped graph $G$ with vertices $u,v,w$, an edge $e$ between $u$ and $v$, and two parallel edges $f_1,f_2$ between $v$ and $w$. Let the vertex labels $X_u,X_v,X_w$ be i.i.d uniform in $\ZZ/4\ZZ$, and let the edge labels be as follows:
$$Y_e = \begin{cases} 0, & \mbox{if } X_u - X_v \in \{0,1\} \\ 1, & \mbox{if } X_u - X_v \in \{2,3\} \end{cases}, Y_{f_1} = \begin{cases} 0, & \mbox{if } X_v - X_w \in \{0,1\} \\ 1, & \mbox{if } X_v - X_w \in \{2,3\} \end{cases},Y_{f_2} = \begin{cases} 0, & \mbox{if } X_v - X_w \in \{0,2\} \\ 1, & \mbox{if } X_v - X_w \in \{1,3\} \end{cases}.$$

Notice that $I_2(X_u; X_w | Y_e, Y_{f_1}, Y_{f_2}) = I_2(X_u; X_v | Y_e) = 1$. Also, $I_2(X_u; X_w | Y_e, Y_{f_1}) = 1/2$ and $I_2(X_u; X_w | Y_e, Y_{f_2}) = 0$. Therefore, the subadditivity over paths $I_2(X_u; X_w | Y_e, Y_{f_1}, Y_{f_2}) \leq I_2(X_u; X_w | Y_e, Y_{f_1}) + I_2(X_u; X_w | Y_e, Y_{f_2})$ does not hold.

Nevertheless, it is an interesting problem to find a generalization of information subadditivity when the vertex labels are uniform over groups. For example, it is plausible that the scaling of the information-theoretic threshold for weak recovery in the $k$-community SBM could be recovered from such a generalization.

\paragraph{A converse to Theorem \ref{thm:mainsymmetricbound}}
When the edge channels of the spin synchronization model are symmetric, Theorem \ref{thm:mainsymmetricbound} from \cite{abbe2018information} and Theorem \ref{thm:yurytheorem} from \cite{yury_percolation} are tight on trees, so one cannot open the edges with lower probability in general. Is there a converse to Theorem \ref{thm:mainsymmetricbound}: i.e., is the mutual information lower-bounded by the connection probability on some non-trivial percolation? For example, for some bounded-degree graphs?

\appendices

\section{Properties of Chi-squared mutual information}\label{app:chi2mutualinformation}

\begin{definition} For two random variables $A,B$ with joint distribution $\nu_{A,B}$, the Chi-squared mutual information $I_2(A;B)$ is the $f$-mutual information $I_f(A; B) := D_f(\nu_{A,B} || \nu_A \times \nu_B)$ for the choice $f(t) = (t-1)^2$. The KL mutual information is the $f$-mutual information with the choice $f(t) = t \log_2 t$. Here, $D_f(\mu || \nu) := \int f\left(\frac{d\mu}{d\nu}\right) d\nu$ is the $f$-divergence introduced in \cite{csiszar1967information}, which is well-defined and has desirable properties such as nonnegativity and monotonicity when $\mu \ll \nu$, $f$ is convex, $f$ is strictly convex at 1, and $f(1) = 0$. In particular, the $f$-mutual informations have a data processing inequality.
\end{definition}

\begin{proposition}\label{prop:correqualschi2}
Let $A,U$ be jointly-distributed random variables, with $U \in \{-1,+1\}$. Then $$I_2(A; U) = \frac{\Var[\EE[U | A]]}{\Var[U]}.$$ In particular, if $U \sim \Rad(1/2)$, then $$I_2(A; U) = \EE[\EE[U | A]^2].$$
\end{proposition}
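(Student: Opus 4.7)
The plan is to reduce the definition of $I_2(A;U)$ to an expectation involving the posterior $\PP(U=\cdot\mid A)$, then use the binary structure of $U$ to collapse the sum over its two values into a variance. First I would expand the definition: with $f(t)=(t-1)^2$, the Radon-Nikodym derivative of $\nu_{A,U}$ with respect to $\nu_A\times\nu_U$ at $(a,u)$ equals $p(a\mid u)/p_A(a)$, and by Bayes' rule this equals $p(u\mid a)/p_U(u)$. Plugging this into the definition gives
\[
I_2(A;U)=\EE_{A}\!\left[\sum_{u\in\{-1,+1\}}\frac{(\PP(U=u\mid A)-\PP(U=u))^2}{\PP(U=u)}\right].
\]

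Next I would evaluate the inner sum by parametrizing $\alpha(a):=\PP(U=1\mid A=a)$ and writing $p:=\PP(U=1)$, $q:=1-p$. Since $(1-\alpha(a))-q=-(\alpha(a)-p)$, the two terms in the sum collapse to $(\alpha(a)-p)^2\bigl(\tfrac1p+\tfrac1q\bigr)=(\alpha(a)-p)^2/(pq)$. Taking the outer expectation and noting $\EE[\alpha(A)]=p$, I obtain
\[
I_2(A;U)=\frac{\Var[\alpha(A)]}{pq}.
\]

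To match this with the target expression, I would then relate $\alpha(A)$ to $\EE[U\mid A]$. Since $U\in\{-1,+1\}$, $\EE[U\mid A]=2\alpha(A)-1$, so $\Var[\EE[U\mid A]]=4\Var[\alpha(A)]$. On the other hand $\Var[U]=1-(p-q)^2=4pq$. Dividing gives $\Var[\EE[U\mid A]]/\Var[U]=\Var[\alpha(A)]/(pq)=I_2(A;U)$, which is the claimed identity. For the special case $U\sim\Rad(1/2)$, we have $\EE[U]=0$ and $\Var[U]=1$, so $\Var[\EE[U\mid A]]=\EE[\EE[U\mid A]^2]$, yielding the simpler formula.

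The only technical concern is the measure-theoretic setup if $A$ is not discrete (care with densities versus Radon-Nikodym derivatives), but this is routine bookkeeping rather than a genuine obstacle; the main conceptual step is simply the observation that with $|U|=1$, the binary sum in the $\chi^2$ expansion collapses to a single squared deviation scaled by $1/(pq)$.
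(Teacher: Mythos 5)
Your proof is correct and follows essentially the same route as the paper's: expand the $\chi^2$ divergence as a sum over the two values of $U$, observe that the two squared deviations coincide because the posterior probabilities sum to one, and then translate $\PP(U=1\mid A)$ into $\EE[U\mid A]$ via the affine map $u\mapsto(u+1)/2$ while rewriting $1/p+1/q$ as $4/\Var[U]$. The only difference is cosmetic notation ($\alpha$, $p$, $q$) versus the paper's Radon--Nikodym notation.
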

\begin{proof}
Letting $\nu_Z$ denote the distribution of $Z$, and $\Omega$ denote the sample set of $A$,
\begin{align}
        I_2(A; U) &= \int_{\Omega \times \{-1,+1\}} \left(\frac{d(\nu_{A,U})}{d(\nu_A \times \nu_U)} - 1\right)^2 d(\nu_{A} \times \nu_{U}) \tag*{} \\ &= \int_{\Omega} \sum_{u \in \{-1,+1\}} \nu_U(u) \left(\frac{1}{ \nu_U(u)} \cdot \frac{d(\nu_{A,U}(\cdot, u))}{d\nu_A(\cdot)} - 1\right)^2 d\nu_A \tag*{} \\ &= \int_{\Omega} \sum_{u \in \{-1,+1\}} \frac{1}{ \nu_U(u)} \left(\frac{d(\nu_{A,U}(\cdot, u))}{d\nu_A(\cdot)} - \nu_U(u)\right)^2 d\nu_A \tag*{}
\end{align}
So, since $$\frac{d(\nu_{A,U}(\cdot, 1))}{d\nu_A(\cdot)} - \nu_U(1) = \left(1 - \frac{d(\nu_{A,U}(\cdot, -1))}{d\nu_A(\cdot)}\right) - (1 - \nu_U(-1)) = -\left(\frac{d(\nu_{A,U}(\cdot, -1))}{d\nu_A(\cdot)} - \nu_U(-1)\right)$$ $\nu_A$-almost everywhere, we have 
\begin{align*}
 I_2(A; U) &= \left( \sum_{u \in \{-1,+1\}}\frac{1}{ \nu_U(u)} \right) \cdot \int_{\Omega}  \left(\frac{d(\nu_{A,U}(\cdot, 1))}{d\nu_A(\cdot)} - \nu_U(1)\right)^2 d\nu_A \\ &= \frac{4}{\Var[U]} \cdot \int_{\Omega} \left(\PP[U = 1 | A] - \PP[U = 1]\right)^2 d\nu_A \\ &= \frac{4\Var[\PP[U = 1|A]]}{\Var[U]} = \frac{4\Var[\EE[(U/2) | A]]}{\Var[U]} = \frac{\Var[\EE[U|A]]}{\Var[U]}.
\end{align*}
When $U \sim \Rad(1/2)$, we have $\EE[U] = 0$ and $\Var[U] = 1$, so $I_2(A; U) = \EE[\EE[U | A]^2]$.
\end{proof}

\begin{proposition}\label{prop:i2equalsprod} Let $(X,Y)$ be a spin synchronization model. Then $I_2(X_uX_v; Y) = I_2(X_u; X_v, Y)$.
\end{proposition}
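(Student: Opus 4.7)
The plan is to apply Proposition \ref{prop:correqualschi2} to both sides of the desired identity, and then exploit the global spin-flip symmetry of the spin synchronization model to relate the two conditional expectations that appear.

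First, observe that both $X_uX_v$ and $X_u$ are $\Rad(1/2)$-distributed (the former because $X_v$ is $\Rad(1/2)$ and independent of $X_u$). Hence by Proposition \ref{prop:correqualschi2},
\begin{align*}
I_2(X_uX_v;\,Y) &= \EE\!\left[\EE[X_uX_v \mid Y]^2\right], \\
I_2(X_u;\,X_v,Y) &= \EE\!\left[\EE[X_u \mid X_v, Y]^2\right].
\end{align*}
So it suffices to show $\EE[X_u \mid X_v, Y] = X_v \cdot \EE[X_uX_v \mid Y]$ almost surely, since then squaring makes the $X_v^2 = 1$ factor disappear and the outer expectations match.

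The key tool is the spin-flip symmetry. Since $Y$ depends on $X$ only through the edge products $X_iX_j$, the map $X \mapsto -X$ leaves the conditional distribution $P_{Y\mid X}$ unchanged, and since $X$ is uniform over $\{\pm 1\}^{V(G)}$ it leaves the joint law $P_{X,Y}$ invariant. Applied to the marginal of $(X_v, X_uX_v, Y)$, this invariance says that flipping $X_v \mapsto -X_v$ while keeping $X_uX_v$ and $Y$ fixed preserves the joint distribution. Two consequences follow: (a) $X_v$ is independent of $Y$ (marginalizing $X_uX_v$), and (b) conditioned on $Y$, $X_v$ is independent of $X_uX_v$.

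With these two facts in hand, I would write $X_u = X_v\cdot(X_uX_v)$ (valid because $X_v^2 = 1$) and compute
\begin{align*}
\EE[X_u \mid X_v, Y] \;=\; X_v\cdot\EE[X_uX_v \mid X_v, Y] \;=\; X_v\cdot\EE[X_uX_v \mid Y],
\end{align*}
where the first equality pulls out the $X_v$-measurable factor and the second uses the conditional independence (b). Squaring and taking expectations then gives
\begin{align*}
\EE\!\left[\EE[X_u\mid X_v, Y]^2\right] \;=\; \EE\!\left[X_v^2\cdot\EE[X_uX_v\mid Y]^2\right] \;=\; \EE\!\left[\EE[X_uX_v\mid Y]^2\right],
\end{align*}
which is exactly the desired equality. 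There is no substantive obstacle here; the only thing to be careful about is cleanly invoking the symmetry to justify the conditional independence, but this is immediate from the model's definition.
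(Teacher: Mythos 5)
Your proof is correct. It rests on the same underlying fact as the paper's argument, namely that $X_v \indy (X_uX_v, Y)$, which you establish cleanly via the global spin-flip invariance $X \mapsto -X$. The route you take, however, is somewhat different: you pass through the conditional-expectation characterization of $I_2$ from Proposition~\ref{prop:correqualschi2} and verify the pointwise identity $\EE[X_u \mid X_v, Y] = X_v\,\EE[X_uX_v\mid Y]$, whereas the paper argues directly at the level of the $f$-divergence, using data-processing for the invertible (given $X_v$) change of variables $X_u \leftrightarrow X_uX_v$ and then dropping the independent $X_v$ from the conditioning. The paper's two-line argument is shorter and, more importantly, applies verbatim to \emph{any} $f$-mutual information (since it only invokes data-processing and independence), while your calculation is specific to $I_2$. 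For this paper's purposes that extra generality is not needed, so both proofs are perfectly adequate; your explicit spelling-out of why $X_v \indy (X_uX_v, Y)$ via the spin-flip symmetry is, if anything, a helpful elaboration of a step the paper states without justification.
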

\begin{proof} \begin{align*}I_2(X_u; X_v, Y) &= I_2(X_uX_v; X_v, Y) \tag{Data-processing} \\ &= I_2(X_uX_v; Y) \tag{Since $X_uX_v, Y \indy X_v$}.\end{align*}
\end{proof}

\begin{proposition}\label{prop:i2conditionequalsi2side}Let $(X,Y)$ be a spin synchronization model. Then $I_2(X_u; X_W, Y) = I_2(X_u; X_W | Y).$
\end{proposition}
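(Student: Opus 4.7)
The plan is to apply Proposition \ref{prop:correqualschi2} twice: once marginally and once conditionally on $Y$. The crucial preliminary observation is that in the spin synchronization model, the posterior law of $X_u$ given $Y$ is still $\Rad(1/2)$ almost surely. This follows from the global $\pm 1$ symmetry of the model: the joint density $P_{X,Y}(x,y)$ is invariant under the involution $x \mapsto -x$, since each factor $P_{Y_{ij} \mid X_i X_j}(y_{ij} \mid x_i x_j)$ depends on $x$ only through the product $x_i x_j$ (which is preserved), and the prior $P_X$ is uniform on $\{-1,+1\}^n$. Hence for almost every $y$, $\PP[X_u = +1 \mid Y = y] = \PP[X_u = -1 \mid Y = y] = 1/2$.

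Given this, I would first apply Proposition \ref{prop:correqualschi2} to the pair $(U, A) = (X_u, (X_W, Y))$, using that $X_u$ is marginally $\Rad(1/2)$, to write
\[
I_2(X_u; X_W, Y) = \EE\!\left[\EE[X_u \mid X_W, Y]^2\right].
\]
Second, using the standard definition $I_2(X_u; X_W \mid Y) := \EE_Y\!\left[I_2(X_u; X_W \mid Y = y)\right]$ of the conditional chi-squared mutual information, I would apply Proposition \ref{prop:correqualschi2} inside the conditional probability space $\{Y = y\}$. Since $X_u \mid Y = y$ is $\Rad(1/2)$ for a.e.\ $y$, the proposition yields
\[
I_2(X_u; X_W \mid Y = y) = \EE\!\left[\EE[X_u \mid X_W, Y = y]^2 \,\Big|\, Y = y\right].
\]
Taking expectation over $Y$ and invoking the tower property then gives $\EE[\EE[X_u \mid X_W, Y]^2]$, matching the first expression.

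No real obstacle arises; the only subtle point is justifying the symmetry-based claim that $X_u \mid Y$ is marginally uniform, which is what allows Proposition \ref{prop:correqualschi2} to be applied in its $\Rad(1/2)$ form on both sides of the identity. The rest of the argument is a direct invocation of the proposition together with the tower property of conditional expectation.
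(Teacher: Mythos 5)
Your proof is correct and follows essentially the same route as the paper's: apply Proposition~\ref{prop:correqualschi2} to both sides and connect them via the tower property, the key observation being that $X_u \mid Y \sim \Rad(1/2)$ almost surely. The paper justifies this last point by noting $X_u \indy Y$ (which holds because $Y$ depends on $X$ only through the products $\{X_iX_j\}_{(i,j)\in E}$), which is equivalent to your global $\pm 1$-symmetry argument.
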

\begin{proof}
\begin{align*}I_2(X_u; X_W, Y) &= \EE[\EE[X_u | X_W, Y]^2] \tag{Prop. \ref{prop:correqualschi2}} \\ &= \EE[\EE[\EE[X_u | X_W, Y]^2 | Y]] \\ &= I_2(X_u; X_W | Y) \tag{Prop. \ref{prop:correqualschi2}, since $X_u \indy Y$, so $X_u | Y \sim \Rad(1/2)$} \end{align*}
\end{proof}

\begin{proposition}\label{prop:evanschi2vsklonradhalf}
Let $A,U$ be joint random variables, $U \sim \Rad(1/2)$. Then $$\frac{1}{2}I_2(A; U) \leq I_{\mathrm{KL}}(U; A) \leq I_2(U; A),$$ where $I_{KL}$ is in bits.
\end{proposition}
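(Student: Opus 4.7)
The plan is to reduce the proposition to a pointwise inequality on the unit interval and then establish that inequality via a Taylor expansion of the binary entropy.

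First, since both $I_{KL}$ and $I_2$ are $f$-mutual informations, I would write each as an average over $A$ of an $f$-divergence of the posterior: $I_{KL}(U;A) = \mathbb{E}_A[D_{KL}(P_{U \mid A}\|P_U)]$ and $I_2(U;A) = \mathbb{E}_A[D_{\chi^2}(P_{U \mid A}\|P_U)]$. Since $U \sim \Rad(1/2)$, each posterior $P_{U \mid A = a}$ is Bernoulli with parameter $p_a := \mathbb{P}[U = 1 \mid A = a]$, and $P_U = \Ber(1/2)$. A direct computation gives
\[
D_{\chi^2}(\Ber(p)\|\Ber(1/2)) = (2p-1)^2, \qquad D_{KL}^{\mathrm{bits}}(\Ber(p)\|\Ber(1/2)) = 1 - h_b(p),
\]
where $h_b$ denotes the binary entropy in bits. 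Consequently the proposition reduces to the pointwise bound
\[
\tfrac{1}{2}(2p-1)^2 \;\leq\; 1 - h_b(p) \;\leq\; (2p-1)^2 \qquad \text{for every } p \in [0,1],
\]
after which taking expectations over $A$ finishes the job.

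For the pointwise step, I would set $u = 2p-1 \in [-1,1]$ and derive the Taylor series
\[
1 - h_b(p) \;=\; \frac{1}{\ln 2}\sum_{k \geq 0}\frac{u^{2k+2}}{(2k+1)(2k+2)},
\]
by expanding $(1-u)\ln(1-u) + (1+u)\ln(1+u)$ as a power series in $u$: the odd-order contributions cancel pairwise and the coefficient of $u^{2k+2}$ is exactly $\tfrac{2}{(2k+1)(2k+2)}$. The upper bound then drops out because $|u| \leq 1$ implies $u^{2k+2} \leq u^2$, combined with the telescoping identity $\sum_{k \geq 0}\bigl(\tfrac{1}{2k+1} - \tfrac{1}{2k+2}\bigr) = \ln 2$, which collapses the bound to $u^2 = (2p-1)^2$. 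For the lower bound I would retain only the first (nonnegative) term of the series, which already yields $1 - h_b(p) \geq \tfrac{u^2}{2\ln 2} \geq \tfrac{u^2}{2}$ because $\ln 2 < 1$.

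The only mild technical obstacle is bookkeeping the Taylor coefficients and recognizing the telescoping identity; the crucial feature that makes the upper bound $I_{KL}^{\mathrm{bits}} \leq I_2$ possible (as opposed to the weaker $I_{KL}^{\mathrm{nats}} \leq I_2$ which holds in general) is the extra factor $\ln 2 < 1$ absorbed into the bits-vs-nats conversion, which is available because $U$ is Rademacher. As a conceptual alternative for the lower bound, one could invoke Pinsker's inequality $D_{KL}^{\mathrm{nats}}(P\|Q) \geq 2\, d_{TV}(P,Q)^2$ with $d_{TV}(\Ber(p), \Ber(1/2)) = |p - 1/2|$, and then convert to bits by dividing by $\ln 2 < 1$, reaching the same constant $1/2$.
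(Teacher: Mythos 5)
Your proof is correct and follows essentially the same route as the paper's: reduce to the pointwise inequality $\tfrac12(2p-1)^2\le 1-h_b(p)\le(2p-1)^2$ on the posterior $P_{U\mid A}$, which is exactly the inequality the paper cites from Evans et al.\ (note the paper's displayed version contains a sign typo---the middle term should read $\tfrac{1+x}{2}\log_2(1+x)+\tfrac{1-x}{2}\log_2(1-x)$). The only difference is that where the paper cites the inequality, you supply a clean self-contained Taylor-series derivation; both the telescoping identity for the upper bound and the single-term truncation for the lower bound are correct.
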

\begin{proof}
As shown in \cite{evans}, this follows from the inequalities $$\frac{x^2}{2} \leq \frac{1+x}{2} \log_2(1-x) + \frac{1-x}{2} \log_2(1+x) \leq x^2.$$
\end{proof}

\begin{proposition}\label{prop:chi2seriesmultiplicativity}
Let $U,V,W \stackrel{i.i.d}{\sim} \Rad(1/2)$. Let $A$ be the output of a channel on $UW$, and let $B$ be the output of a channel on $WV$. Then $$I_2( UV; A,B) = I_2(UW; A)I_2(WV; B).$$
\end{proposition}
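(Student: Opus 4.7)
The approach is to reduce the product identity to an independence statement using the algebraic fact $UV = (UW)(WV)$ — valid because $W \in \{\pm 1\}$ — together with Proposition \ref{prop:correqualschi2}, which rewrites the Chi-squared mutual information as an expected squared conditional expectation whenever one input is $\Rad(1/2)$.

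First, I would set $S := UW$ and $T := WV$. A direct counting argument (for each pair $(s,t) \in \{\pm 1\}^2$ there are exactly two triples $(u,v,w)$ with $uw = s$ and $wv = t$, each of probability $1/8$) shows that $S$ and $T$ are independent $\Rad(1/2)$ variables. Moreover, viewing the two channels as $A = f_A(S, N_A)$ and $B = f_B(T, N_B)$ with noise variables $N_A, N_B$ independent of each other and of $(U,V,W)$, I can conclude the stronger statement that $(S, A) \indy (T, B)$. The key identity is then $UV = ST$.

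Next, since $UV$, $S$, and $T$ are each $\Rad(1/2)$ marginally, Proposition \ref{prop:correqualschi2} rewrites the three Chi-squared mutual informations as $\EE[\EE[UV \mid A, B]^2]$, $\EE[\EE[S \mid A]^2]$, and $\EE[\EE[T \mid B]^2]$ respectively. Applying the independence $(S, A) \indy (T, B)$ together with $UV = ST$, I would factor
\[
\EE[UV \mid A, B] \;=\; \EE[ST \mid A, B] \;=\; \EE[S \mid A] \cdot \EE[T \mid B].
\]
Squaring both sides and taking expectations, and using $A \indy B$ once more to split the expected product as a product of expectations, yields
\[
\EE[\EE[UV \mid A, B]^2] \;=\; \EE[\EE[S \mid A]^2] \cdot \EE[\EE[T \mid B]^2],
\]
which is exactly the desired identity $I_2(UV; A, B) = I_2(UW; A) \cdot I_2(WV; B)$.

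The only subtle point — which I expect to be the main obstacle — is the independence $S \indy T$ despite their shared randomness in $W$, but this is the standard fact that distinct parity functions of i.i.d.\ Rademacher variables are pairwise independent. Once this is in place and the conditional expectation factors, the remaining steps are routine manipulations of $\Rad(1/2)$ conditional expectations.
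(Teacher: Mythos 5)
Your proof is correct and takes essentially the same route as the paper: both reduce to $\EE[\EE[UV\mid A,B]^2]$ via Proposition \ref{prop:correqualschi2}, use $UV = (UW)(WV)$, factor the conditional expectation across $A$ and $B$, and then split the outer expectation. The only difference is organizational — you derive the single independence $(UW, A) \indy (WV, B)$ from the pairwise independence of $UW$ and $WV$ plus the functional representation of the two channels, whereas the paper invokes the implied conditional independencies ($UW \indy WV \mid A,B$, $UW \indy B \mid A$, $WV \indy A \mid B$, and $A \indy B$) one at a time without spelling out their common source; your packaging is a bit cleaner but the underlying argument is identical.
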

\begin{proof}
\begin{align*}I_2(UV; A,B) &= \EE[\EE[UV|A,B]^2] \tag{Prop. \ref{prop:correqualschi2}} \\ &= \EE[\EE[UWWV| A,B]^2] \tag{Since $W^2 = 1$}\\
&= \EE[\EE[UW|A,B]^2\EE[WV|A,B]^2] \tag{Using $UW \indy WV | A,B$}\\
&= \EE[\EE[UW | A]^2\EE[WV | B]^2] \tag{Using $UW \indy B | A$ and $WV \indy A | B$} \\ &= \EE[\EE[UW | A]^2] \EE[\EE[WV | B]^2] \tag{Using $A \indy B$ because $U,V \stackrel{i.i.d}{\sim} \Rad(1/2)$.} \\ &= I_2(UW; A)I_2(WV; B) \tag{Prop. \ref{prop:correqualschi2}}.\end{align*}
\end{proof}

\begin{lemma}\label{lem:chi2parallelsubadditivity}
Let $U \sim \Rad(1/2)$. Let $A$ and $B$ be the outputs of two independent channels on $U$. Then $I_2(U; A,B) \leq I_2(U; A) + I_2(U; B).$
\end{lemma}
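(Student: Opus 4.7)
The plan is to reduce the lemma to a direct computation using Proposition \ref{prop:correqualschi2}. Write $\alpha = \EE[U\mid A]$, $\beta = \EE[U\mid B]$, and $\hat U_{AB} = \EE[U\mid A,B]$. The proposition gives $I_2(U;A)=\EE[\alpha^2]$, $I_2(U;B)=\EE[\beta^2]$, and $I_2(U;A,B)=\EE[\hat U_{AB}^2]$, so the claim becomes $\EE[\hat U_{AB}^2]\le\EE[\alpha^2]+\EE[\beta^2]$.

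First I would derive an explicit formula for $\hat U_{AB}$. Since $A$ and $B$ are outputs of independent channels from $U$, they are conditionally independent given $U$; combining this with Bayes' rule and the uniform prior $U\sim\Rad(1/2)$ yields the ``velocity-addition'' identity $\hat U_{AB} = (\alpha+\beta)/(1+\alpha\beta)$, together with the companion density identity $P_{A,B}(a,b) = P_A(a)P_B(b)\bigl(1+\alpha(a)\beta(b)\bigr)$. The next step is to use this density identity to tilt to the product measure $P_A\otimes P_B$, under which $\alpha$ and $\beta$ become independent mean-zero random variables taking values in $[-1,1]$. Tilting yields
$$\EE[\hat U_{AB}^2] \;=\; \EE_{A\otimes B}\!\left[\frac{(\alpha+\beta)^2}{1+\alpha\beta}\right],$$
while the marginal-only quantities $\EE[\alpha^2]$ and $\EE[\beta^2]$ are unchanged, so it remains to show that
$$\EE_{A\otimes B}\!\left[\frac{(\alpha+\beta)^2}{1+\alpha\beta} - \alpha^2 - \beta^2\right] \;\le\; 0.$$

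The heart of the argument is an algebraic identity: clearing denominators shows
$$\frac{(\alpha+\beta)^2}{1+\alpha\beta} - (\alpha^2+\beta^2) \;=\; \frac{\alpha\beta\,(2-\alpha^2-\beta^2)}{1+\alpha\beta}.$$
Using $1/(1+\alpha\beta) = 1 - \alpha\beta/(1+\alpha\beta)$, I would further split this right-hand side as
$$\alpha\beta(2-\alpha^2-\beta^2) \;-\; \frac{(\alpha\beta)^2(2-\alpha^2-\beta^2)}{1+\alpha\beta}.$$
Under $P_A\otimes P_B$ the first summand has zero expectation: its three monomials $2\alpha\beta$, $-\alpha^3\beta$, $-\alpha\beta^3$ each factor across the independent coordinates and contain $\EE[\alpha] = 0$ or $\EE[\beta] = 0$. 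The second summand is pointwise nonnegative since $|\alpha|,|\beta|\le 1$ makes both $1+\alpha\beta\ge 0$ and $2-\alpha^2-\beta^2\ge 0$; hence its negation has nonpositive expectation. Combining these two estimates gives the desired inequality.

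The main obstacle is discovering this two-piece decomposition, because the naive pointwise inequality $(\alpha+\beta)^2/(1+\alpha\beta)\le\alpha^2+\beta^2$ is false in general (take $\alpha=\beta=1/2$, giving $16/15 > 1/2$). Subadditivity is thus an averaged phenomenon, and the proof must crucially exploit that after the tilt to the product measure, $\alpha$ and $\beta$ are independent and mean zero, which is exactly what kills the ``moment'' piece and leaves only the signed ``remainder'' piece.
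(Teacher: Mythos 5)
Your proof is correct and takes a genuinely different route from the paper's. Both are ultimately calculations, but the paper works directly with the $\chi^2$-divergence expansion: it writes $I_2(U;A,B)-I_2(U;A)-I_2(U;B)$ as a sum of two expectation terms (their eqs.~\eqref{eq:term1} and \eqref{eq:term2}), one of which reduces to $\EE[U\mid A]^2(c_A-1)$ with $0\le c_A\le 1$ via a ``subadditivity of $xy/(x+y)$'' argument, and the other of which is massaged into $\sum_b\bigl(-\tfrac{(\gamma_b-\delta_b)^2}{4(\gamma_b+\delta_b)}\bigr)\sum_a(\alpha_a-\beta_a)\tfrac{\alpha_a\gamma_b-\beta_a\delta_b}{\alpha_a\gamma_b+\beta_a\delta_b}$ and shown nonpositive via a term-wise comparison against zero-sum weights. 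Your argument is cleaner: by tilting to the product measure $P_A\otimes P_B$ and using the velocity-addition formula $\hat U_{AB}=(\alpha+\beta)/(1+\alpha\beta)$ together with the density identity $dP_{A,B}/d(P_A\otimes P_B)=1+\alpha\beta$, you reduce the whole inequality to one algebraic identity plus the observation that under the tilt, $\alpha$ and $\beta$ become \emph{independent and mean-zero}, killing the ``moment'' piece $\alpha\beta(2-\alpha^2-\beta^2)$ while the remainder $-(\alpha\beta)^2(2-\alpha^2-\beta^2)/(1+\alpha\beta)$ is pointwise $\le 0$. This is more conceptual: it isolates independence + centering as the single structural ingredient, whereas the paper's proof interleaves these facts into two somewhat opaque computations. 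Your phrasing also makes transparent \emph{why} the result is an averaged and not a pointwise inequality.

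Two small things to tighten. First, your counterexample number is off: with $\alpha=\beta=1/2$ one gets $(\alpha+\beta)^2/(1+\alpha\beta)=1/(5/4)=4/5$, not $16/15$; the conclusion ($4/5>1/2$, so the pointwise inequality fails) is still correct. Second, the step $\EE[\hat U_{AB}^2]=\EE_{A\otimes B}\bigl[(\alpha+\beta)^2/(1+\alpha\beta)\bigr]$ implicitly divides by $1+\alpha\beta$ which can vanish (when $(\alpha,\beta)=(1,-1)$ or $(-1,1)$, i.e.\ precisely where $P_{A,B}=0$ but $P_A\otimes P_B$ may assign positive mass). This is a removable singularity — one can restrict the expectation to $\{1+\alpha\beta>0\}$, note that on the complement $\alpha^2=\beta^2=1$ so $2-\alpha^2-\beta^2=0$ and hence the ``moment'' term still integrates to zero, while dropping the $\alpha^2+\beta^2$ contribution from the excluded set only helps — but it should be stated explicitly.
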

\begin{proof}Let $\nu_{A,B,U}$ denote the joint distribution of $A,B,U$. For simplicity, we prove the lemma when $A,B$ are discrete. For any two random variables $C,D$ with joint distribution $\nu_{C,D}$, $I_2(C; D) = D_{(1-1/t)}(\nu_C\nu_D||\nu_{C,D})$, where $D_{(1/1-t)}$ is the $(1-1/t)$-divergence, so\begin{align}
&I_2(U; A,B) - (I_2(U; A) + I_2(U; B)) \tag*{} \\ &= \EE[(\frac{\nu_{A,B,U}}{\nu_{A,B}\nu_U}-1) - (\frac{\nu_{A,U}}{\nu_A\nu_U}-1)-(\frac{\nu_{B,U}}{\nu_B\nu_U}-1)] \tag*{} \\ &= \EE[(\frac{\nu_{U|A,B}}{\nu_{U|A}}-1)(\frac{\nu_{A,U}}{\nu_A\nu_{U}} - 1)] \label{eq:term1} \\
&~~~~~~~+\EE[(\frac{\nu_{U|A,B}}{\nu_{U|A}}-1) - (\frac{\nu_{B,U}}{\nu_{B}\nu_{U}} - 1)] \label{eq:term2}
\end{align}


We claim Terms \eqref{eq:term1} and \eqref{eq:term2} are $\leq 0$, which implies the lemma statement.

We rewrite Term \eqref{eq:term1}, using $\frac{\nu_{A,U}(a,u)}{\nu_A(a)\nu_U(u)}-1 = 2 \nu_{U|A}(u|a) - 1 = \EE[U|A=a] \cdot u$:
\begin{align*}
    \mbox{\eqref{eq:term1}} &= \EE[(\frac{\nu_{U|A,B}}{\nu_{U|A}} - 1) \cdot \EE[U|A] \cdot U] \\ &= \EE[\EE[U|A] \cdot (U \cdot \frac{\nu_{U|A,B}}{\nu_{U|A}} - U)] \\ &= \EE[\EE[U|A] \cdot (U\cdot \frac{\nu_{U|A,B}}{\nu_{U|A}} - \EE[U|A])].
\end{align*}
Define $$t_a := \sum_{b} \frac{\nu_{B,U|A}(b,1|a)\nu_{B,U|A}(b,-1|a)}{\nu_{B,U|A}(b,1|a) + \nu_{B,U|A}(b,-1|a)}.$$

Note \begin{equation}\label{ineq:subadditivityofteaay}0 \leq t_a \leq \nu_{U|A}(1|a) \nu_{U|A}(-1|a)\end{equation} by the subadditivity of $f(x,y) = xy/(x+y)$ for $x,y \geq 0$. (In particular, for all $a,b,c,d \geq 0$, $f(a,b) + f(c,d) \leq f(a+c,b+d)$.) So

\begin{align*}&\EE[U \cdot \frac{\nu_{U|A,B}}{\nu_{U|A}} | A] = \sum_{u} \frac{u}{\nu_{U|A}} \sum_{b} \frac{\nu_{B,U|A} \nu_{B,U|A}}{\nu_{B|A}} \\ &= \sum_{u} \frac{u}{\nu_{U|A}} \sum_{b} \left(\frac{(\nu_{B|A} - (\nu_{B|A} -\nu_{B,U|A}))\nu_{B,U|A}}{\nu_{B|A}}\right) \\ &= \sum_{u} \frac{u}{\nu_{U|A}} (-t_A + \sum_{b} \nu_{B,U|A}) = \sum_{u} \frac{u}{\nu_{U|A}} (\nu_{U|A} - t_A) \\ &= -\sum_{u} u \frac{t_A}{\nu_{U|A}} \\ &= -\frac{t_A}{\nu_{U|A}(1|A)} + \frac{t_A}{\nu_{U|A}(-1|A)} \\ &= \frac{t_A (\nu_{U|A}(1|A) - \nu_{U|A}(-1|A))}{\nu_{U|A}(1|A)\nu_{U|A}(-1|A)} \\ &=  \frac{t_A}{\nu_{U|A}(1|A)\nu_{U|A}(-1|A)} \cdot \EE[U|A] \\
&= c_A\EE[U|A],\end{align*} for some $0 \leq c_A \leq 1$ by \eqref{ineq:subadditivityofteaay}. Thus, $\eqref{eq:term1} = \EE[U|A]^2 (c_A - 1) \leq 0$, as desired.

Now we bound Term \eqref{eq:term2}.
\begin{align*}
    \mbox{\eqref{eq:term2}} &= \EE[(\frac{\nu_{B,U}}{\nu_{B|A}\nu_U} - 1) - (\frac{\nu_{B,U}}{\nu_B\nu_{U}} - 1)] \tag{Using $B \indy A | U$}\\ &= \EE[(\frac{\nu_A\nu_{B}}{\nu_{A,B}}-1)(\frac{\nu_{B,U}}{\nu_B\nu_U} - 1)] \tag{Since $\EE[\frac{\nu_B}{\nu_{B|A}} - 1] = 0$} \\
&= \sum_{a,b,u} \nu_{A,B,U} \left(\frac{\nu_{B}}{\nu_{B \mid A}} - 1\right)\left(\frac{\nu_{B \mid U}}{\nu_{B}} - 1\right) 
\end{align*}
For compactness, write $\alpha_a = \nu_{A|U}(a|1), \beta_a = \nu_{A|U}(a|-1), \gamma_b = \nu_{B|U}(b|1), \delta_b = \nu_{B|U}(b|-1)$:
\begin{align*}
\eqref{eq:term2} &= \sum_{a,b} \frac{\alpha_a\gamma_b}{2} \left(\frac{(\gamma_b + \delta_b)/2}{(\alpha_a\gamma_b + \beta_a\delta_b)/(\alpha_a + \beta_a)} - 1\right)\left(\frac{\gamma_b}{(\gamma_b + \delta_b)/2} - 1\right) \\ &~~~~~~~~~~~+ \frac{\beta_a\delta_b}{2} \left(\frac{(\gamma_b + \delta_b)/2}{(\alpha_a\gamma_b + \beta_a\delta_b)/(\alpha_a + \beta_a)} - 1\right)\left(\frac{\delta_b}{(\gamma_b + \delta_b)/2} - 1\right) \\
&= \sum_{a,b} \frac{\alpha_a\gamma_b}{2} \left(\frac{(\gamma_b + \delta_b)(\alpha_a + \beta_a)}{2(\alpha_a\gamma_b + \beta_a\delta_b)} - 1\right)\left(\frac{\gamma_b - \delta_b}{\gamma_b + \delta_b}\right) + \frac{\beta_a\delta_b}{2} \left(\frac{(\gamma_b + \delta_b)(\alpha_a + \beta_a)}{2(\alpha_a\gamma_b + \beta_a\delta_b)} - 1\right)\left(\frac{\delta_b - \gamma_b}{\gamma_b + \delta_b}\right) \\
&= \sum_{a,b}  \left(\frac{\gamma_b - \delta_b}{\gamma_b + \delta_b}\right)\left(\frac{\alpha_a\gamma_b}{2}\left(\frac{(\gamma_b + \delta_b)(\alpha_a + \beta_a)}{2(\alpha_a\gamma_b + \beta_a\delta_b)} - 1\right) - \frac{\beta_a\delta_b}{2} \left(\frac{(\gamma_b + \delta_b)(\alpha_a + \beta_a)}{2(\alpha_a\gamma_b + \beta_a\delta_b)} - 1\right)\right) \\ 
&= \sum_{a,b} \left(\frac{\gamma_b - \delta_b}{\gamma_b + \delta_b}\right)\left(\frac{\alpha_a\gamma_b - \beta_a\delta_b}{2}\right) \left(\frac{(\gamma_b + \delta_b)(\alpha_a + \beta_a)}{2(\alpha_a\gamma_b + \beta_a\delta_b)} - 1\right) \\
&= \sum_{a,b} \left(\frac{\gamma_b - \delta_b}{\gamma_b + \delta_b}\right)\left(\frac{\alpha_a\gamma_b - \beta_a\delta_b}{4}\right) \left(\frac{-(\alpha_a - \beta_a)(\gamma_b - \delta_b)}{\alpha_a\gamma_b + \beta_a\delta_b}\right) \\
&= \sum_{a,b} \left(\frac{(\gamma_b - \delta_b)^2}{4(\gamma_b + \delta_b)}\right) \left(-(\alpha_a - \beta_a)\frac{\alpha_a\gamma_b - \beta_a\delta_b}{\alpha_a\gamma_b + \beta_a\delta_b}\right) \\
&= \sum_{b} \left(-\frac{(\gamma_b - \delta_b)^2}{4(\gamma_b + \delta_b)}\right) \sum_{a} \left((\alpha_a - \beta_a)\frac{\alpha_a\gamma_b - \beta_a\delta_b}{\alpha_a\gamma_b + \beta_a\delta_b}\right).
\end{align*} We conclude by using \begin{equation*}\sum_a (\alpha_a - \beta_a) = \sum_a \nu_{A|U}(a|1) - \nu_{A|U}(a|-1) = 1 - 1 = 0,\end{equation*} so \begin{align*}\sum_a (\alpha_a - \beta_a) (\frac{\alpha_a \gamma_b - \beta_a \delta_b}{\alpha_a \gamma_b + \beta_a \delta_b}) &= \sum_a (\alpha_a - \beta_a) (\frac{\alpha_a \gamma_b - \beta_a \delta_b}{\alpha_a \gamma_b + \beta_a \delta_b} + 1) \\ &= \sum_a (\alpha_a - \beta_a) (\frac{2\alpha_a\gamma_b}{\alpha_a \gamma_b + \beta_a \delta_b}) \\ &\geq \sum_a (\alpha_a - \beta_a) (\frac{2\gamma_b}{\gamma_b + \delta_b}) \tag{The inequality is term-wise} \\ &= 0.\end{align*}

Since $\left(-\frac{(\gamma_b - \delta_b)^2}{4(\gamma_b + \delta_b)}\right) \leq 0$ for all $b$, this proves that Term $\eqref{eq:term2} \leq 0$.

\end{proof}


\section{Proof of Theorem \ref{thm:mainsymmetricbound} and Corollary \ref{cor:subadditivitybound}}\label{sec:thmmainsymmetricboundproof}
For the sake of completeness, we provide here a compact proof of the main result of \cite{abbe2018information}.

\begin{proof}[Proof (of Theorem \ref{thm:mainsymmetricbound})]
~
\\
\indent \textit{(Step 1) Reduce to case $|W| = 1$}: Construct the graph $G'$ by adding a new vertex $v$ adjacent to all $w \in W$, letting $X_v \sim \Rad(1/2)$ and defining $Y_{vw} = X_vX_w$ for all $w \in W$. Now $I_2(X_u; X_v | Y_{E(G')}) = I_2(X_u; X_W | Y_{E(G')}) = I_2(X_u; X_v | Y_{E(G)})$, $\mathrm{conn}_{G',\gamma'}(u,v) = \mathrm{conn}_{G,\gamma}(u,W)$, and $(X,Y)$ is a symmetric spin synchronization instance on $G'$.
Hence, it suffices to prove the bound 
$I_2(X_u; X_W | Y) \leq \mathrm{conn}_{G,\gamma}(u,W)$ in the case $|W| = 1$.

\textit{(Step 2) Reduce to BSC case}:
For each edge channel $Q_e(\cdot | \pm 1)$, let $T_e$ be the symmetry transformation and define $Z_e = \{Y_e, T_e(Y_e)\}$. By the symmetry property of the edge channels, $X \indy Z$. So $\cL(X,Y|Z)$, the law of $(X,Y)$ conditioned on $Z$, is almost surely the law of a spin synchronization model $(X',Y')$ on $G$, where each of the channels $Q'_e$ is binary-valued (either $Y_e$ or $T_e(Y_e)$). Explicitly, for $z \in Z_e$, \begin{align*}Q'_e(z | +1) &= \frac{dQ_e(z | +1)}{d(Q_e(z | +1) + Q_e(T_e(z) | +1))},\end{align*} and by the symmetry property this equals \begin{align*}\frac{dQ_e(T_e(z) | -1)}{d(Q_e(T_e(z) | -1) + Q_e(z | -1))} = Q'_e(T_e(z) | -1).\end{align*} So $Q'_e$ is a binary symmetric channel.
Hence, proving Theorem \ref{thm:mainsymmetricbound} when all the edge channels are BSC yields the general bound: \begin{align*}I_2(X_u; X_v | Y) &= \EE_Z[I_2(X_u; X_v | Y,Z)] \\ &= \EE_Z[I_2(X'_u; X'_v | Y', Z)] \\ &\leq \EE_Z[\mathrm{conn}_{G,\gamma_Z}(u,v)] \tag*{(by BSC case)} \\ &= \mathrm{conn}_{G,\gamma}(u,v) \hspace{1.42cm} \mbox{(since $\gamma = \EE_Z[\gamma_Z]$)}.\end{align*} Here, $\gamma_Z((i,j)) = I_2(X_i, X_j | Y_{ij},Z_{ij}),$ and for the last equality we use the fact that $\gamma((i,j)) = I_2(X_i, X_j | Y_{ij}) = \EE_Z[\gamma_Z((i,j))]$.

\textit{(Step 3) Prove BSC case}: Now it only remains to prove $I_2(X_u; X_v | Y) \leq \mathrm{conn}_{G,\gamma}(u,v)$ when all edge channels $Q_e$ are BSC. Let the flip probability of $Q_e$ be $\varepsilon(e)$, and define $\delta(e) = (1-2\varepsilon(e))$. We can assume that $\delta(e) \in [0,1]$, because we lose no information by flipping edge labels deterministically. Also, by direct calculation $\gamma(e) = \delta(e)^2.$

The proof goes by induction on $|S_{\delta}|$, where $$S_{\delta} := \{e \in E(G) : \delta(e) \not\in \{0,1\}\}.$$

In the base case, $|S_{\delta}| = 0$, so all edge observations are completely noiseless or completely noisy. Hence, $I_2(X_u, X_v | Y) = 1$ if there is a path $P$ from $u$ to $v$ whose edges are all noiseless. If there is no such path, then $I_2(X_u, X_v | Y) = 0$. This is exactly the statement $I_2(X_u, X_v | Y) = \mathrm{conn}_{G,\gamma}(u,v)$.

For the inductive step, assume the theorem when the BSC channels are given by $\delta' : E(G) \to [0,1]$ with $|S_{\delta'}| < |S_{\delta}|$. Pick an arbitrary edge $f \in S_{\delta}$. We will now interpolate between the case in which $\delta(f) = 0$, and the case in which $\delta(f) = 1$, with the other edge channels held fixed. For any $t \in [0,1]$, let $\delta_t : E(G) \to [0,1]$ be given by $\delta_t(e) = \delta(e)$ for $e \neq f$, and $\delta_t(f) = t$. Define corresponding spin synchronization models $(X_t, Y_t)$, and also $\gamma_t = \delta_t^2$. Write $$I(t) := I_2(X_{t,u}; X_{t,v} | Y_t), \mbox{ and } C(t) := \mathrm{conn}_{G,\gamma_t}(u,v).$$ In order to prove that $I(t) \leq C(t)$ for all $t \in [0,1]$, we need the following claim:
\begin{claim}\label{claim:termwisegrowthbound} There is non-decreasing $h : [0,1] \to \RR$ such that
\begin{equation*}\label{eq:ggrowthbound}I(t) = I(0) + (I(1) - I(0)) \cdot t^2 \cdot h(t),\end{equation*}
\end{claim}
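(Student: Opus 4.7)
The plan is to compute $I(t)$ in closed form by conditioning on the observations away from the single edge whose noise level is varying. Write $f = (i,j)$, let $Y' := Y_{E(G) \setminus \{f\}}$, and let $S := X_i X_j$, so that $Y_{t,f} = S Z_t$ for an independent $Z_t \in \{\pm 1\}$ with $\EE[Z_t] = t$. Introduce the $Y'$-measurable posteriors $s := \EE[S | Y']$ and $m, d$ defined by $m \pm d := \EE[X_u X_v | Y', S = \pm 1]$, so that $\EE[X_u X_v | Y', S] = m + dS$. Crucially, none of $s, m, d$ depend on $t$; only the posterior of $S$ given the edge observation shifts with $t$.

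Since $Y_{t,f}$ is conditionally independent of $X_u X_v$ given $(Y', S)$, the tower property gives $\EE[X_u X_v | Y', Y_{t,f}] = m + d \cdot \EE[S | Y', Y_{t,f}]$, and a one-line Bayes calculation produces $\EE[S | Y', Y_{t,f} = \pm 1] = (s \pm t)/(1 \pm st)$ with weights $\PP[Y_{t,f} = \pm 1 | Y'] = (1 \pm st)/2$. Since $X_u X_v \sim \Rad(1/2)$ marginally, Proposition \ref{prop:correqualschi2} gives $I(t) = \EE[\EE[X_u X_v | Y', Y_{t,f}]^2]$. Squaring, averaging the two branches of $Y_{t,f}$ against the weights above, then averaging over $Y'$, the cross term linear in $m$ collapses to a $t$-independent constant (because $\EE[\EE[S | Y', Y_{t,f}] | Y'] = s$), and the $d^2$ piece simplifies via the identity $s^2 + t^2 - 2s^2 t^2 - s^2(1 - s^2 t^2) = t^2(1-s^2)^2$ to
$$I(t) - I(0) = t^2 \cdot \EE_{Y'}\!\left[\frac{d^2 (1-s^2)^2}{1 - s^2 t^2}\right], \qquad I(1) - I(0) = \EE_{Y'}[d^2 (1-s^2)].$$

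Assuming $I(1) > I(0)$, set $h(t) := \EE_{Y'}[d^2 (1-s^2)^2/(1-s^2 t^2)] \big/ \EE_{Y'}[d^2 (1-s^2)]$. For each fixed $Y'$ the map $t \mapsto 1/(1 - s^2 t^2)$ is non-decreasing on $[0,1]$, so $h$ inherits this monotonicity, and the cancellation $(1-s^2)^2/(1-s^2) = 1-s^2$ on $\{|s|<1\}$ (together with the vanishing of both numerator and denominator integrands on $\{|s|=1\}$) gives $h(1) = 1$. Thus $I(t) = I(0) + (I(1) - I(0)) t^2 h(t)$, as required. In the degenerate case $I(1) = I(0)$, nonnegativity of the integrand forces $d^2(1-s^2) = 0$ almost surely, which in turn makes $d^2(1-s^2)^2 = 0$ almost surely, so $I(t) \equiv I(0)$ and any non-decreasing $h$ suffices. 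The only non-routine step is the algebraic collapse that produces the clean factor of $t^2(1-s^2)^2/(1-s^2 t^2)$; once that is in hand, the monotonicity of $h$ is immediate from pointwise monotonicity of the integrand.
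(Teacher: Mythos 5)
Your proof is correct, and it follows the same high-level route as the paper's: condition on the observations $Y'$ away from the interpolating edge $f$, observe that only the posterior of $S = X_iX_j$ moves with $t$, and show per-$Y'$ that $I(t)-I(0)$ factors as $t^2$ times a nonnegative, non-decreasing function. The difference is in the per-$Y'$ bookkeeping. The paper parametrizes by the joint table $a=\PP[A{=}1,S{=}1\mid Y'],\,b,\,c,\,d$, plugs the BSC kernel into the explicit formula for the conditional second moment of the posterior, and simplifies to $h(t;\sigma) = 16(ad-bc)^2 / \bigl(1-t^2(a-b+c-d)^2\bigr)$. You instead factor through the scalar posterior $s=\EE[S\mid Y']$ and the affine decomposition $\EE[A\mid Y',S]=m+dS$, then apply a one-line Bayes update to $\EE[S\mid Y',Y_{t,f}]$, so the $m$-term drops by the tower property and the $d^2$-term collapses algebraically to $t^2(1-s^2)^2/(1-s^2t^2)$. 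The two are literally the same quantity — with $a+b+c+d=1$ one checks $(a-b+c-d)^2 = s^2$ and $16(ad-bc)^2 = d_{\mathrm{yours}}^2(1-s^2)^2$ — but your parametrization makes the Markov structure through $S$ explicit and avoids the long intermediate expression the paper leaves implicit; you also handle the normalization by $I(1)-I(0)$ and the degenerate case $I(1)=I(0)$ explicitly, details the paper elides.
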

Assume the claim is true. Then, $h(1) = 1$, and since $h(t)$ is non-decreasing, $h(t) \leq 1$. Hence, \begin{align*}I(t) &\leq I(0) + (I(1) - I(0)) \cdot t^2 \\ &= I(0) \cdot (1-t^2) + I(1) \cdot t^2 \\ &\leq C(0) \cdot (1-t^2) + C(1) \cdot t^2 = C(t).\end{align*} The inequality of the last line follows because $I(0) \leq C(0)$ and $I(1) \leq C(1)$ by the inductive hypothesis. The equality of the last line follows by the linearity of the connection probability in the parameter $\gamma_t(f) = t^2$.

It only remains to prove the claim. Write $E' = E(G) \sm f$. Also write $f = (i,j)$, $A_t = X_{t,u} \cdot X_{t,v}$, and $B_t = X_{t,i} \cdot X_{t,j}$. By Proposition \ref{prop:correqualschi2}, and because $Y_{t,E'}$ is a subset of $Y_t$, $$I(t) = \EE[\EE[A_t | Y_t]^2] = \EE[\EE[\EE[A_t | Y_t]^2 | Y_{t,E'}]].$$

Since the only edge channel to change with $t$ is $Q_f$, we can couple $X_0 = X_t$, and $Y_{0,E'} = Y_{t,E'}$.
Hence, it suffices to prove that the function $$h(t; Y_{0,E'}) := \frac{1}{t^2}\left(\EE[\EE[A_t | Y_t]^2 | Y_{t,E'}] - \EE[\EE[A_t | Y_0]^2 | Y_{0,E'}]\right)$$ is non-decreasing in $t$, since then we can set $h(t) = \sum_{\sigma \in \{-1,+1\}^{E'}} h(t; \sigma) \cdot \PP[Y_{0,E'} = \sigma],$ which will also be non-decreasing in $t$.

Fix $\sigma \in \{-1,+1\}^{E'}$ such that $\PP[Y_{t,E'} = \sigma] > 0$, and let $P_{\alpha\beta} = \PP[(A_t, B_t) = (\alpha,\beta) \mid Y_{t,E'} = \sigma].$ Set $a = P_{1,1}$, $b = P_{1,-1}$, $c = P_{-1,1}$, $d = P_{-1,-1}$. Since $Y_{t,f} \indy A_t,Y_{t,E'} | B_t$, one can explicitly calculate
\begin{align*}\EE[\EE[&A_t|Y_t]^2|Y_{t,E'}] \\ = &\bigg( \frac{((a(1-t) + b(1+t)) - (c(1-t)+d(1+t)))^2}{2((a(1-t) + b(1+t)) + (c(1-t)+d(1+t)))} \\ &+\frac{((a(1+t) + b(1-t)) - (c(1+t)+d(1-t)))^2}{2((a(1+t) + b(1-t)) + (c(1+t)+d(1-t)))}\bigg),\end{align*}
Plugging this in and simplifying, if $b = d = 0$ or $a = c = 0$, then $h(t; \sigma) = 0$, which is non-decreasing because it is constant. Otherwise we get $h(t; \sigma) = \frac{16(ad - bc)^2}{1-t^2(a-b+c-d)^2},$ which is non-decreasing on $[0,1]$ because $(a-b+c-d)^2 < (a+b+c+d)^2 = 1$.
This proves the claim.

\end{proof}
\begin{lemma}\label{lem:i2onpath}
Suppose $(X,Y)$ is a spin synchronization model on a path $P$ with endpoints $u$ and $v$. Then $$\mathrm{conn}_{P,\gamma}(u,v) = I_2(X_u; X_v | Y_{E(P)}).$$
\end{lemma}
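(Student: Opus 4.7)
The plan is to reduce the claim to a product formula via iterated application of Proposition~\ref{prop:chi2seriesmultiplicativity}, and observe that the same product formula is obvious for the connection probability on a path. First, note that for a path $P$ with edges $e_1,\ldots,e_k$, the event that $u$ and $v$ lie in the same open component is precisely the event that all edges of $P$ are open, which occurs with probability $\prod_{i=1}^k \gamma(e_i)$ by the independence of the edges in the percolation. Thus it suffices to prove
\[
I_2(X_u; X_v \mid Y_{E(P)}) \;=\; \prod_{i=1}^k \gamma(e_i) \;=\; \prod_{i=1}^k I_2(X_{v_{i-1}}; X_{v_i} \mid Y_{e_i}),
\]
where $u = v_0, v_1, \ldots, v_k = v$ are the consecutive vertices of $P$.

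By Propositions \ref{prop:i2equalsprod} and \ref{prop:i2conditionequalsi2side}, each conditional Chi-squared mutual information $I_2(X_a; X_b \mid Z)$ appearing above can be rewritten as $I_2(X_aX_b; Z)$. So the desired identity is equivalent to
\[
I_2(X_{v_0}X_{v_k}; Y_{e_1},\ldots,Y_{e_k}) \;=\; \prod_{i=1}^k I_2(X_{v_{i-1}}X_{v_i}; Y_{e_i}).
\]
I would prove this by induction on $k$. The base case $k=1$ is trivial. For the inductive step, I would split off the first edge $e_1 = (v_0,v_1)$ from the subpath $P' = e_2,\ldots,e_k$ joining $v_1$ to $v_k$, setting $U=X_{v_0}$, $W=X_{v_1}$, $V=X_{v_k}$, $A = Y_{e_1}$, and $B = (Y_{e_2},\ldots,Y_{e_k})$, and then apply Proposition~\ref{prop:chi2seriesmultiplicativity} to get $I_2(UV;A,B) = I_2(UW;A)\,I_2(WV;B)$. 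The inductive hypothesis then reduces $I_2(WV;B) = I_2(X_{v_1}X_{v_k}; Y_{E(P')})$ to the product over $e_2,\ldots,e_k$, completing the inductive step.

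The only subtlety is verifying the hypotheses of Proposition~\ref{prop:chi2seriesmultiplicativity}, namely that $B$ can legitimately be viewed as the output of a channel on $WV = X_{v_1}X_{v_k}$, independent of the channel producing $A$ from $UW$. Independence is immediate since the noise on the edges and the intermediate vertex spins are all mutually independent of $Y_{e_1}$ given $X_{v_1}$. For the channel structure, the distribution of $B$ given $(X_{v_1},X_{v_k})$ is obtained by marginalizing over the intermediate $X_{v_2},\ldots,X_{v_{k-1}}$, and one checks by a symmetry argument (simultaneously flipping all spins on the path, which leaves the uniform prior and the edge channel laws on products invariant) that this conditional law depends on $(X_{v_1},X_{v_k})$ only through the product $X_{v_1}X_{v_k}$. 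I expect this symmetry verification to be the main technical point, though it is a short observation rather than a serious obstacle.
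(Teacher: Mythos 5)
Your proof is correct and follows essentially the same route as the paper: express the connection probability on a path as the product $\prod_{i}\gamma(e_i)$, convert each factor to $I_2(X_{v_{i-1}}X_{v_i};Y_{e_i})$ via Propositions~\ref{prop:i2equalsprod} and~\ref{prop:i2conditionequalsi2side}, and merge by iterating Proposition~\ref{prop:chi2seriesmultiplicativity}, finishing with the telescoping $\prod_i X_{v_{i-1}}X_{v_i}=X_uX_v$. You are somewhat more explicit than the paper in spelling out the induction and in verifying that $B=(Y_{e_2},\ldots,Y_{e_k})$ is genuinely a channel output on $X_{v_1}X_{v_k}$; your sign-flip symmetry argument is valid, and it can also be seen immediately by reparametrizing in terms of the edge spin-products $S_i=X_{v_{i-1}}X_{v_i}$, which are i.i.d.\ $\Rad(1/2)$ with $Y_{e_i}$ a channel output on $S_i$.
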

\begin{proof}
 \begin{align*}&\mathrm{conn}_{P,\gamma}(u,v) = \textstyle \prod_{(i,j) \in E(P)} I_2(X_i; X_j | Y_{ij}) \\
 &= \textstyle \prod_{(i,j) \in E(P)} I_2(X_i \cdot X_j; Y_{ij}) \tag*{(Prop. \ref{prop:i2equalsprod})} \\
 &= I_2(\textstyle \prod_{(i,j) \in E(P)} X_i \cdot X_j; Y_{E(P)}) \tag*{(Prop. \ref{prop:chi2seriesmultiplicativity})} \\
 &= I_2(X_u \cdot X_v; Y_{E(P)}) = I_2(X_u; X_v | Y_{E(P)}) \tag*{(Prop. \ref{prop:i2equalsprod})}
 \end{align*}
\end{proof}

\begin{proof}[Proof (of Corollary \ref{cor:subadditivitybound})]

The corollary follows from Theorem \ref{thm:mainsymmetricbound}, the union bound $\mathrm{conn}_{G,\gamma}(u,v) \leq \sum_{P \in \cP_G(u,v)} \mathrm{conn}_{P,\gamma}(u,v),$ and Lemma \ref{lem:i2onpath}.
\end{proof}

\section*{Acknowledgements}
This research was partly supported by NSF CAREER Award CCF-1552131.

\printbibliography

\end{document}